\theoremstyle{plain}
\newtheorem{theorem}{Theorem}[section]
\newtheorem{corollary}[theorem]{Corollary}
\newtheorem{Proposition}[theorem]{Proposition}
\newtheorem{Lemma}[theorem]{Lemma}
\theoremstyle{definition}
\newtheorem{defn}[theorem]{Definition}
\newtheorem{remark}[theorem]{Remark}
\newcommand{\thmref}[1]{Theorem~\ref{#1}}
\newcommand{\lemref}[1]{Lemma~\ref{#1}}
\newcommand{\corref}[1]{Corollary~\ref{#1}}
\newcommand{\propref}[1]{Proposition~\ref{#1}}
\DeclareMathOperator{\PGL}{PGL} \DeclareMathOperator{\GL}{GL}
\newcommand{\BZ}{\mathbb{Z}}
\newcommand{\BC}{\mathbb{C}}
\newcommand{\BP}{\mathbb{P}}
\newcommand{\BF}{\mathbb{F}}\newcommand{\BQ}{\mathbb{Q}}
\newcommand{\BA}{\mathbb{A}}
\DeclareMathSymbol{\twoheadrightarrow}  {\mathrel}{AMSa}{"10}
\def\Q{{\mathbb Q}}
\def\A8{{\mathbf A}_8}
\def\Bir{\mathrm{Bir}}
\def\Aut{\mathrm{Aut}}
                                               \def\Bir{\mathrm{Bir}}
\def\GL{\mathrm{GL}}
\def\A{\mathcal{A}}
\def\dim{\mathrm{dim}}
       \def\PGL{\mathrm{PGL}}
          \def\l1{{\mathbf 1}}
                                                                                        \newcommand{\ov}{\overline}
\title[Jordan groups]{Jordan properties of automorphism groups  of certain open algebraic  varieties}
\thanks{The second named author is partially supported by a grant from the Simons Foundation (\#246625 to Yuri Zarkhin). Part of this work was done in May-June 2016 during his stay at the Max-Planck-Institut f\"ur Mathematik, whose hospitality and support are gratefully acknowledged.}
\author {Tatiana Bandman}
\author[Yuri G.\ Zarhin]{Yuri G.\ Zarhin}
\address{Department of
Mathematics, Bar-Ilan University, 5290002, Ramat Gan, ISRAEL}
\email{bandman@macs.biu.ac.il}
\address{Department of Mathematics, Pennsylvania State University,
University Park, PA 16802, USA}
\email{zarhin\char`\@math.psu.edu}
\begin{document}

\begin{abstract}
Let $W$ be a quasiprojective variety over an algebraically closed field of characteristic zero. Assume  that $W$   is birational to a product of a  smooth projective   variety  $A$ and the projective line. We prove that  if  $A$  contains no rational curves   then the automorphism  group   $G:=\Aut(W)$
of $W$  is Jordan.   That means that   there is a positive integer $J=J(W)$ such that every finite subgroup $\mathcal{B}$ of ${G}$ contains a   commutative subgroup 
$\mathcal{A}$  such that $\mathcal{A}$ is normal in   $\mathcal{B}$  and the index $[\mathcal{B}:\mathcal{A}] \le J$ .   \end{abstract}

\subjclass[2010]{14J50,  14E07, 14J27,  14L30, 14J30, 14K05}

\maketitle

\section{Introduction}\label{introduction}

Throughout this paper $k$ is an algebraically closed field of characteristic zero.
  All varieties, if not indicated otherwise, are irreducible, algebraic, and defined over $k.$ 
If $X$ is an algebraic variety over $k$ then we  write $\Aut(X)$ for its group of (biregular) automorphisms and $\Bir(X)$ for its group of birational automorphisms.
 As usual,  $\mathbb{P}^n$ stands for the $n$-dimensional  projective space and $\mathbb{A}^n$  (  $\mathbb{A}^n_{x_1,\dots,x_n}$)  for the $n$-dimensional affine space (with coordinates  $x_1,\dots,x_n$, respectively).

The definition of a  Jordan group  was    introduced in  \cite{Popov1}.
\begin{defn}\label{jordan}
 A group $\mathcal{G}$ is called {\bf Jordan} \cite{Popov1} if there exists a positive integer $J$ that enjoys the following property. Every finite subgroup $\mathcal{B}$ of $\mathcal{G}$ contains a commutative subgroup $\mathcal{A}$ such that $\mathcal{A}$ is normal in $\mathcal{B}$ and the index $[\mathcal{B}:\mathcal{A}] \le J$.  Such a smallest $J$ is called the Jordan index of $\mathcal{G}$ and denoted by $J_{\mathcal{G}}.$ \end{defn}

\begin{defn}\label{groupproperty} Let 
 $G$  be a group. \begin{itemize}\item[(a)]
$G$ is called {\sl bounded}  \cite{Popov2,PS1} if there is a positive integer 
$C=C_G$ such that the order of every finite subgroup of $G$  does not exceed $C$. 
\item[(b)]  $G$ is called {\sl quasi-bounded} if  there is a nonnegative integer  $a:=a(G)$ such that each finite abelian subgroup   of $G$ is generated by at most $A$ elements.
\item[(c)] $G$
is called {\sl strongly  Jordan} \cite{PS2,BZ16} if it is Jordan and quasi-bounded.
\end{itemize}\end{defn}

\begin{remark}
\label{exactR}
\begin{itemize}
\item[(i)]
If 
$$\{0\} \to G_1 \to G \to G_2\to \{0\}$$
is a short exact sequence of groups and both $G_1$ and $G_2$  are  bounded (resp. quasi-bounded) then one may easily check that
$G$ is also bounded (resp.  quasi-bounded).  
Indeed, let $H$ be a finite (resp. finite abelian) subgroup of $G$. Let $H_2$ be the image of $H$ in $G_2$ and $H_1$ the intersection of $H$ and
the kernel of $G \to G_2$. Then $H$ sits in the short exact sequence
$$\{0\} \to H_1 \to H \to H_2\to \{0\}$$
where $H_i$ is a finite (resp. finite abelian) subgroup of $G_i, \ i=1,2.$
If both $G_1$ and $G_2$ are bounded then the order of $H$ does not exceed $C_{G_1} C_{G_2}$, i.e., $G$ is also bounded.
If both $G_1$ and $G_2$ are quasi-bounded then $H$ is generated by, at most, $a(G_1)+a(G_2)$ elements \cite[Lemma 2.3]{MengZhang}. 
\item[(ii)]
If both $G_1$ and $G_2$ are Jordan then $G$ does {\sl not} have to be Jordan.
\item[(iii)] Clearly, every subgroup of a (strongly) Jordan group is also  (strongly) Jordan.\end{itemize}
\end{remark}

 The group  $\GL_{n}(\BZ)$ is   bounded by 
 Minkowski's Theorem (\cite[Sect. 9.1]{SerreFG}).
The classical theorem of Jordan (\cite[Sect. 36]{CR}, \cite[Sect. 9.2]{SerreFG},\cite{MuTu}) asserts that $\GL(n,k)$ is strongly Jordan. 
  An example of a non Jordan group is  given by  $\GL (n,  \overline \BF_p)$   where 
$\overline\BF_p$ is the algebraic closure of a finite field $\mathbb  F_p$ and $n \ge 2$.

We refer the reader to \cite{Popov2} for references and survey on this topic.

 Let  $X$ be an algebraic variety over $k.$     It is known that $\Aut(X)$ is Jordan if either $\dim(X)\le 2$  \cite{Popov1,BZ15} or $X$ is projective \cite{MengZhang}. 
It is also known (\cite{PS1}    combined with \cite{Bir}),  that   if $X$ is an  irreducible  variety  then $\Bir(X)$ is Jordan
 if   either $q(X)=0$ or  $X$ is  not uniruled     (in particular,  Cremona groups $\Bir(\BP^N)$  and groups $\Aut(\BA^N)$ are Jordan).

 On the other hand, $\Bir(X)$ is {\sl not} Jordan if $X$ is birational to a product $A\times \mathbb{P}^n$ where
$n \ge 1$ and $A$ is a positive-dimensional abelian variety over $k$ \cite{ZarhinPEMS}. 

 Since $\Aut(X)$ is a subgroup
 of $\Bir(X)$, it is Jordan whenever $\Bir(X)$ is Jordan. But $\Aut(X)$ may be Jordan when $\Bir(X)$ is not. 
 To the best of our knowledge, there is no example of an algebraic variety  with non-Jordan automorphisms group. 
The aim of this paper is to prove the  Jordan property of the group $\Aut(X)$  for open subsets of certain uniruled varieties. 
\begin{defn}\label{rigid} 
We call a smooth projective variety $A$  {\sl rigid} if it is  irreducible    and  contains no rational curves.\end{defn}
We prove the following 
\begin{theorem}
\label{main}
Let $W$ be an irreducible quasiprojective variety that is birational
 to a product  $A\times \mathbb{P}^1$ where $A$ is  a smooth rigid projective  variety.  
  Then $\Aut(W)$ is strongly Jordan.
\end{theorem}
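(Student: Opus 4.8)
The plan is to exploit the birational model $A\times\mathbb{P}^1$ in two stages: first reduce the biregular automorphisms of $W$ to something controlled by $\Aut(A)$ together with a group of ``fiberwise'' transformations, and then show each of those two pieces is strongly Jordan. Because $A$ is rigid, any rational curve in $W$ (in particular a general fiber of the projection to $A$) cannot dominate $A$; more precisely, the pencil of rational curves on $W$ coming from the $\mathbb{P}^1$-factor is canonically attached to $W$, so every $g\in\Aut(W)$ must preserve it. Concretely, I would argue that the rational map $W\dashrightarrow A$ is, up to birational equivalence of the base, the \emph{maximal rationally connected fibration} (or the ``$\mathbb{P}^1$-MRC quotient'') of $W$, and that rigidity of $A$ forces this map to be canonical. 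This yields a homomorphism $\rho\colon\Aut(W)\to \Bir(A)$, and since $A$ is rigid (hence contains no rational curves, so in particular is non-uniruled), $\Bir(A)=\Aut(A)$ is a group of automorphisms of a smooth projective variety, hence strongly Jordan by \cite{MengZhang} together with quasi-boundedness coming from its action on cohomology.

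Next I would analyze the kernel $N=\ker\rho$. An element of $N$ induces a birational self-map of the generic fiber, which is $\mathbb{P}^1$ over the function field $K=k(A)$; so $N$ embeds into $\PGL_2(K)$ (the group of $K$-automorphisms of $\mathbb{P}^1_K$), because a birational self-map of $\mathbb{P}^1$ over any field is automatically biregular. Now $\PGL_2(K)$ over a field $K$ of characteristic zero is strongly Jordan: finite subgroups of $\PGL_2(K)$ are cyclic, dihedral, or one of $A_4,S_4,A_5$, so one may take $J=60$ and $a=2$. Thus $N$ is strongly Jordan, and we have a short exact sequence $\{0\}\to N\to\Aut(W)\to \rho(\Aut(W))\to\{0\}$ with $N$ strongly Jordan and $\rho(\Aut(W))\subseteq\Aut(A)$ strongly Jordan.

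Finally, to conclude that the extension is strongly Jordan — which, as \remarkref{exactR}(ii) warns, does not follow formally — I would use the standard additional ingredient: $\rho(\Aut(W))$ acts on $A$, hence on $H^*(A,\mathbb{Q})$ (or on $\NS(A)$ and $H^0(A,\Omega^1)$), giving a bound on the order of its elements' linearized parts, while $N\subseteq\PGL_2(K)$ is bounded-by-abelian with a \emph{uniform} normal abelian subgroup (the diagonal torus) of index $\le 12$; combining a Jordan subgroup of $\rho(\Aut(W))$ with the torus part of $N$ and controlling the finitely many ``cocycle'' obstructions gives the desired $J(W)$. Quasi-boundedness is immediate since both $N$ and $\rho(\Aut(W))$ are quasi-bounded and quasi-boundedness passes through extensions by \remarkref{exactR}(i).

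The main obstacle I anticipate is the first step: making rigorous the claim that the $\mathbb{P}^1$-fibration is canonical, i.e.\ that $\Aut(W)$ really does act on the base $A$ in a well-defined way. The subtlety is that $W$ is only \emph{birational} to $A\times\mathbb{P}^1$ and is merely quasiprojective, so one must argue that regular automorphisms of $W$ extend to (or descend from) birational automorphisms of $A\times\mathbb{P}^1$ respecting the first projection — here the hypothesis that $A$ contains \emph{no} rational curves (not merely that it is non-uniruled) is what prevents a putative automorphism from mixing the fiber direction with the base, since any rational curve in $W$ must map to a point in $A$. Handling the indeterminacy locus of $W\dashrightarrow A$ and showing the induced map $\Aut(W)\to\Aut(A)$ is a genuine homomorphism is the technical heart of the argument.
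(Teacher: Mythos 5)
Your first two steps agree with the paper: rigidity of $A$ makes the projection $p\colon W\to A$ canonical and every $f\in\Aut(W)$ fiberwise (this is item (g) of Section~\ref{s2}, quoting \cite{BZ16}), giving the exact sequence $\{0\}\to H_i\to\Aut(W)\to H_a\to\{0\}$ with $H_a\subset\Aut(A)$ strongly Jordan and $H_i$ embedding into the automorphism group of the generic fiber, hence into $\PGL_2(k(A))$, which is indeed strongly Jordan. The genuine gap is your last step. An extension of a strongly Jordan group by $\PGL_2(k(A))$ need \emph{not} be Jordan: $\Bir(A\times\BP^1)$ for $A$ a positive-dimensional abelian variety sits in exactly the sequence \eqref{ARexact}, with both ends strongly Jordan, yet it is not Jordan \cite{ZarhinPEMS}, because theta groups produce finite subgroups in which the commutator pairing between the $\PGL_2(k(A))$-part and the translation part of $\Aut(A)$ is nondegenerate, so no abelian subgroup of uniformly bounded index exists. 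Your proposal to ``combine the diagonal torus of $N$ with a Jordan subgroup of the image and control finitely many cocycle obstructions'' is precisely the step that fails in that example, and nothing in your argument up to that point distinguishes $\Aut(W)$ from $\Bir(A\times\BP^1)$; as written it would prove a false statement.

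The missing input is geometric and is where the paper spends most of its effort: a biregular automorphism of $W$ must preserve the boundary $Z=X\setminus W$, hence the set of $m(Z)$ punctures on the generic fiber $\BP^1_{k(A)}$. If $m(Z)>2$ the kernel $H_i$ is finite; if $m(Z)=2$ with a single Galois orbit of punctures it is bounded (Lemma~\ref{field}); in either case Lemma~\ref{sides}(1) applies. In the remaining cases one reduces to $m(Z)=0$ (Lemmas~\ref{m1}, \ref{m2}, \ref{m00}), i.e.\ to a fibration over an open $H_a$-invariant subset of $A$ with \emph{projective} fibers, and then Proposition~\ref{m0} re-fibers $W$ over the Rosenlicht quotient of $A$ by the abelian variety $G_0\subset\Aut(A)_0$ stabilizing $S$: the generic fiber of the composite fibration is projective, so its automorphism group is strongly Jordan by Meng--Zhang, while the residual group acting on the quotient is bounded, and Lemma~\ref{sides}(2) concludes. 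Without some version of this case analysis and re-fibration, your outline cannot be closed.
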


The case of  $\dim(W)=2, \ \dim(A)=1$ was done in \cite{ Popov1,ZarhinTG15,BZ15}.

The case of  $\dim(W) =3$  was studied in    \cite{ZarhinPEMS,PS0,PS1,BZ16,PS2}.  Here is the final answer for $\Bir(X)$  \cite{PS2}. 
Let $X$ be a threefold. Then  $\Bir(X)$ is   not Jordan if and only if    either $X$ is birational to $E\times \BP^2,$   where $E$ is an elliptic curve,   or $X$ is birational 
to $S\times \BP^1,$
where $S$ is one of the following:
 
 {\bf Case  1.} An abelian surface;

{\bf  Case  2.}  A bielliptic surface;

 {\bf    Case  3.} A surface with Kodaira dimension  $ \varkappa(S)=1$ such that the 
Jacobian fibration of the pluricanonical fibration is locally trivial in Zariski topology.

Thus, \thmref{main} leads to the following 
\begin{corollary}\label{main3d} Assume that $W$ is a quasiprojective    irreducible variety of dimension $d\le 3.$ Assume that $ W$  is not  birational to $E\times \BP^2, $  where $E$ is an elliptic curve.  Then $\Aut (W)$ is Jordan.
\end{corollary}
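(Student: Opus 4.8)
The plan is to derive \corref{main3d} from \thmref{main}, from the description (recalled in the introduction) of threefolds with non-Jordan group of birational automorphisms, and from the known fact that $\Aut$ of a variety of dimension $\le 2$ is Jordan. If $\dim(W)\le 2$ there is nothing to do, so assume $\dim(W)=3$. I would first ask whether $\Bir(W)$ is Jordan; if it is, then its subgroup $\Aut(W)$ is Jordan by \remarkref{exactR}(iii) and we are done. If it is not, then by the classification from \cite{PS2} recalled above, $W$ is birational either to $E\times\BP^2$ with $E$ elliptic --- which is excluded by hypothesis --- or to a product $S\times\BP^1$, where $S$ is an abelian surface, a bielliptic surface, or a minimal surface of Kodaira dimension $1$ whose pluricanonical Jacobian fibration is Zariski-locally trivial. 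In each of these three cases it is enough, by \thmref{main}, to show that the (smooth, projective, irreducible) surface $S$ is rigid in the sense of \defnref{rigid}, i.e.\ carries no rational curve; then $W$ is birational to $S\times\BP^1$ with $S$ rigid and \thmref{main} yields that $\Aut(W)$ is (strongly) Jordan.

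For an abelian surface $S$ this is classical: every morphism $\BP^1\to S$ is constant, since the cotangent bundle of $S$ is globally generated while $\Omega^1_{\BP^1}$ has no global sections. For a bielliptic surface $S$ there is a finite étale cover $E_1\times E_2\to S$ by an abelian surface; a rational curve in $S$ would lift, $\BP^1$ being simply connected, to a rational curve in $E_1\times E_2$, contradicting the abelian case. So Cases $1$ and $2$ are settled.

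The remaining case --- a minimal surface $S$ with $\varkappa(S)=1$ whose pluricanonical Jacobian fibration $J\to C$ is Zariski-locally trivial --- is where the real work lies, and is the step I expect to be the main obstacle. I would argue as follows. Zariski-local triviality of $J\to C$ makes $J\to C$ smooth, which forces the pluricanonical elliptic fibration $f\colon S\to C$ to have no reducible or nodal/cuspidal fibers: every fiber of $f$ is, with its reduced structure, a smooth elliptic curve, and $f$ may have multiple fibers $m_1F_1,\dots,m_rF_r$ only. Hence no rational curve can sit inside a fiber of $f$. To exclude a \emph{horizontal} rational curve $R\subseteq S$ (one with $f(R)=C$): if $g(C)\ge 1$ this is immediate, so assume $C=\BP^1$. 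Using the local description of $S$ near a multiple fiber $m_iF_i$ as a free $\mu_{m_i}$-quotient $(\widetilde{\Delta}\times F_i)/\mu_{m_i}$ (rotation on $\widetilde{\Delta}$, translation by a point of exact order $m_i$ on $F_i\cong E$), one checks that an irreducible horizontal curve meets $m_iF_i$ only at points where the induced map $R^{\nu}\to C$ from the normalization $R^{\nu}\cong\BP^1$ is ramified to order divisible by $m_i$. On the other hand, since $f$ has no singular fibers one has $\chi(\mathcal O_S)=0$, so the canonical bundle formula together with $\varkappa(S)=1$ gives $\sum_i(1-1/m_i)>2$. Combining this with Riemann--Hurwitz for $R^{\nu}\to C=\BP^1$ of degree $d$ yields $2d-2=\deg(\mathrm{Ram})\ge d\sum_i(1-1/m_i)>2d$, a contradiction. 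Hence $S$ carries no rational curve, is rigid, and \thmref{main} applies.

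Thus the only genuinely nontrivial point is the last case: showing that Zariski-local triviality of the Jacobian fibration rules out horizontal rational curves on $S$, which rests on the local structure of a minimal elliptic surface near a multiple fiber with smooth reduction and the resulting divisibility of ramification indices. Everything else reduces to a direct appeal to \thmref{main}, to \remarkref{exactR}(iii), and to the cited classification of threefolds. As an alternative to the Riemann--Hurwitz computation one may instead invoke the structure theorem presenting such an $S$ as a free quotient $(C'\times E)/G$ with $g(C')\ge 2$ and lift rational curves to the étale cover $C'\times E$, which has none.
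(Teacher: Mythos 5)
Your proposal is correct and follows essentially the same route as the paper: reduce via the classification of \cite{PS2} to showing that the surface $S$ is rigid, dispose of the abelian and bielliptic cases directly, and in the Kodaira-dimension-one case rule out a horizontal rational curve by combining the divisibility of ramification indices over multiple fibers with Riemann--Hurwitz and the orbifold inequality $\sum_i(1-1/m_i)>2$, which is exactly the computation in the paper's Lemma~\ref{elhyp}. The only (inessential) differences are that you justify the ramification bound via the local model of a multiple fiber rather than asserting it, and you do not mention the paper's reduction to $k=\BC$ (Remark~\ref{complexC}) needed to invoke the complex-analytic facts in Case 3.
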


\begin{remark}
\label{smooth}
Let $W$ be a (nonempty) irreducible algebraic variety over $k$ and $W^{\mathrm{ns}}\subset W$ the open dense (sub)set of its nonsingular points. Then $u(W^{\mathrm{ns}})\subset W^{\mathrm{ns}}$ for each $u\in\Aut(W)$. This gives rise to the natural  group homomorphism $\Aut(W)\to \Aut(W^{\mathrm{ns}})$, which is injective, since $W^{\mathrm{ns}}$ is dense in $W$ in Zariski topology. This implies that in the course of the proof of  \thmref{main}  and  \corref{main3d}   we may assume that $W$ is {\sl smooth}.
\end{remark}

The paper is organized as follows. Section \ref{s1} contains notation  and  auxiliary results about  fiberwise automorphisms of fibered varieties. In Section \ref{s2} we discuss automorphism groups of varieties that are birational to a product $A \times \BP^1$ where $A$ is a smooth rigid projective variety. Section \ref{s3} contains the proof of  \thmref{main} and  \corref{main3d}

\subsection{Acknowledgements} We are grateful to Shulim  Kaliman,
 Michel Brion, and Vladimir  Berkovich  for helpful discussions. Our thanks go to the referees,  whose comments helped to improve the exposition.

\section{Preliminaries}
\label{s1}

If $X$ is an irreducible  algebraic variety over $k$ then 
\begin{itemize}

\item
We write $k[X]$ for the ring of regular functions on $X$ and
$k(X)$ for its field of rational functions. In this case one may view $\Bir(X)$ as the group of all $k$-linear automorphisms  of $k(X)$ and $\Aut(X)$ as a certain subgroup of $\Bir(X)$. We write $\mathrm{id}_X$ for the identity automorphism of $X$, which may be viewed as the identity element of groups $\Aut(X)$ and $\Bir(X)$.
\item

 By points of $X$ (unless otherwise stated)
we always mean $k-$points.  A {\sl  general point} means  a point of an open dense 
subset of $X.$  

\item
If $X$ is smooth then 
$K_X$ and   $q(X)$ stand for the canonical class of $X $ and 
irregularity $h^{0,1}(X)$   of $X$, respectively. 


\item  $ \BC,$  $\BQ$ and $\BZ$ stand for fields of complex numbers,   the rationals, and ring of integers, respectively.

\item If $F$ is a field  then we write $\ov F$ for  its algebraic closure.


\item
Let $X,Y ,T $  be irreducible varieties,    $p:X\to T, \ q:Y\to T$   morphisms.  
We say that a rational map  $f:X\dasharrow Y$ is  $p,q- $fiberwise  if there exists morphism $g_f:T\to T$ such that
 the following diagram commutes:
\begin{equation}\label{diagram000}
\begin{CD}
X  @>{f}>>  Y \\
@Vp VV @Vq VV \\
T @>{g_f}>>T
\end{CD}.
\end{equation}
\item
If $X=Y, \ p=q , \  f\in \Bir (X), $ then we say that $f$ is $p-$fiberwise   and denote by $\Bir_p(X)$ the group of all $p-$ fiberwise birational automorphisms of $X.$   
We write $\Aut_p(X)$ for the intersection of     $\Bir_p(X)$ and $\Aut(X)$ in $\Bir(X)$, which is the  group of all $p-$ fiberwise automorphisms of $X.$
\item   Recall that if a smooth projective variety $A$ is rigid,   then   any rational map from a smooth variety to $A$ is a morphism  (\cite[Corollary 1.44]{Deba}).  In particular,
 $\Bir(A)=\Aut(A).$ Abelian varieties and bielliptic surfaces are rigid.
\end {itemize}

We start with an  auxiliary

\begin{Lemma}\label{lem1}  Assume that $U,V$ are smooth 
irreducible
quasiprojective varieties endowed by a  surjective morphism 
$p:U\to V$ such that the fiber $P_v:=p^{-1}(v)$ is projective and irreducible 
for every point $v\in V.$
Assume that $C\subset U$ is a closed subset and that $C\cap P_v$ is a finite set for every 
point  $v\in V.$
 Assume that $f\in \Aut _p (U\setminus C).$

Then $f\in \Aut _p(U).$
\end{Lemma}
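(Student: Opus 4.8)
The plan is to show that the birational automorphism $f$ of $U$, known to be biregular on $U\setminus C$ and fiberwise over $V$, has no points of indeterminacy on all of $U$ and likewise for $f^{-1}$. Since $U$ is smooth and $f$ is already biregular in codimension one (the complement $C$ has fibers of dimension $0$, hence $C$ has codimension $\ge 1$ in $U$, and a priori could be a divisor, so this needs a small argument), the key is to control the fibers. First I would fix a point $u_0\in C$, set $v_0=p(u_0)$ and $P:=P_{v_0}=p^{-1}(v_0)$, a projective irreducible variety. I would like to conclude that $f$ restricted to $P\setminus C$ extends to a morphism $P\to U$, and in fact lands in a single fiber $P_{w_0}$ where $w_0=g_f(v_0)$.

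The main technical device will be properness/projectivity of the fibers. Consider the graph $\Gamma_f\subset (U\setminus C)\times U$ of $f|_{U\setminus C}$ and let $\overline{\Gamma}$ be its closure in $U\times U$; let $\pi_1,\pi_2$ be the two projections. Over $U\setminus C$, $\pi_1$ is an isomorphism; I need to show $\pi_1$ is an isomorphism over $C$ too. Because $f$ is $p$-fiberwise, $\overline{\Gamma}$ maps into the subvariety $\{(u,u'): g_f(p(u))=p(u')\}$, so for $u\in C$ the fiber $\pi_1^{-1}(u)$ maps under $\pi_2$ into the single fiber $P_{g_f(p(u))}$, which is projective. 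Now restrict the whole picture over a curve $B\to V$ through $v_0$ (or just work with the fiber directly): the point is that $f$ defines a rational map $P\dashrightarrow P_{w_0}$ between projective varieties, defined on the open dense $P\setminus C$ whose complement in $P$ is finite. Since $P$ is smooth? — not necessarily, but $U$ is smooth and $P$ is a fiber, which need not be smooth; however $P$ is normal if we are lucky, and in any case we can resolve. The cleanest route: since $C\cap P_v$ is finite for the generic $v$ near $v_0$ as well, $C$ does not contain any fiber, so $C$ has codimension $\ge 1$; if $C$ were a divisor it would dominate $V$ and meet the generic fiber in a divisor of that fiber, contradicting finiteness when $\dim P_v\ge 1$ — wait, if $\dim P_v=0$ the statement is a tautology, so assume $\dim P_v\ge 1$, whence $C$ has codimension $\ge 2$ in $U$.

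With $C$ of codimension $\ge 2$ in the smooth variety $U$, the rational map $f:U\dashrightarrow U$ is an isomorphism outside a codimension-$\ge 2$ set; by symmetry the same holds for $f^{-1}$. I would then invoke properness of the target fibers to kill the remaining indeterminacy: the locus of indeterminacy $\mathrm{Ind}(f)$ is contained in $C$, hence has codimension $\ge 2$, and for each $u\in\mathrm{Ind}(f)$ the fiber $\pi_1^{-1}(u)\subset\overline{\Gamma}$ is a positive-dimensional projective subvariety of $\{u\}\times P_{g_f(p(u))}$. Blowing down / using that $\overline\Gamma\to U$ is projective and birational, such positive-dimensional fibers over points of a smooth variety form a divisorial exceptional locus (van der Waerden purity / the fact that a birational projective morphism to a smooth variety with a point of positive-dimensional fiber has that fiber's image inside a divisor). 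But $\pi_1(\text{exceptional divisor})\subseteq C$ which has codimension $\ge 2$ — contradiction unless $\mathrm{Ind}(f)=\emptyset$. Hence $f$ is a morphism on all of $U$; running the same argument for $f^{-1}$ shows $f\in\Aut(U)$, and it is fiberwise over $V$ with the same $g_f$, so $f\in\Aut_p(U)$.

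The main obstacle I anticipate is the step asserting that the exceptional locus of the birational projective morphism $\pi_1:\overline\Gamma\to U$ is a divisor, i.e. that there are no "small" contractions to a smooth variety — this is exactly van der Waerden's purity of the branch/exceptional locus, and I should cite it carefully (and make sure $U$ being smooth, not merely normal, is used). A secondary subtlety is that fibers $P_v$ need not be smooth or even normal, so one must argue with $U$ smooth throughout rather than restricting to a fiber and using smoothness there; the fiberwise hypothesis $g_f:V\to V$ and the projectivity of each $P_v$ are what let the argument go through without such restriction.
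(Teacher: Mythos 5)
There are two genuine gaps, and both occur at the places where your argument was supposed to do the real work. First, the claim that $C$ has codimension $\ge 2$ in $U$ is false precisely in the case that matters for this paper: when $\dim P_v=1$ (fibers $\cong\BP^1$), a ``divisor of the fiber'' \emph{is} a finite set of points, so a horizontal divisor of $U$ (a multisection of $p$) meets every fiber in finitely many points and satisfies all the hypotheses on $C$. Your parenthetical ``contradicting finiteness when $\dim P_v\ge 1$'' only works for $\dim P_v\ge 2$. Second, even granting $\mathrm{codim}_U\,C\ge 2$, the purity step does not yield a contradiction: van der Waerden purity says the exceptional locus $E$ of the birational projective morphism $\pi_1:\overline\Gamma\to U$ is a divisor \emph{in $\overline\Gamma$} whose image in $U$ has codimension $\ge 2$ --- and that is exactly what happens for the blow-up of a point on a smooth variety, so ``$\pi_1(E)\subseteq C$ with $\mathrm{codim}\,C\ge 2$'' is entirely consistent with $\mathrm{Ind}(f)\neq\emptyset$. (More globally: flops show that a birational map of smooth varieties that is an isomorphism in codimension one need not be an isomorphism, so no argument of this purely codimension-counting type can close the proof.)

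What is missing is any effective use of the hypothesis that $C\cap P_v$ is \emph{finite}, which is the actual engine of the lemma. The correct continuation of your graph construction is: for $u\in C$, show that $\pi_2\bigl(\pi_1^{-1}(u)\bigr)$ lands not merely in the projective fiber $P_{g_f(p(u))}$ but in the finite set $C\cap P_{g_f(p(u))}$; since $\pi_1^{-1}(u)$ is connected (Zariski's Main Theorem applied to the birational proper morphism $\pi_1$ from a normal model), its image is then a single point, so $f$ is defined at $u$. The inclusion $\pi_2(\pi_1^{-1}(C))\subseteq C$ itself needs an argument --- the paper gets it by noting that over $U\setminus C$ each fiber of the resolved map contains an isolated point, hence by ZMT equals that point, so the preimage of $U\setminus C$ is exactly the locus where $\pi_1$ is an isomorphism. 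You would also need to justify properness of $\pi_1$ (the target $U$ is not proper; one uses that $\overline\Gamma$ sits inside the fiber product of $p$ with itself over $(g_f\circ p, p)$ and that $p$ is proper, which is where the hypothesis that \emph{every} fiber is projective and irreducible enters, via a relative compactification as in the paper). As written, your proof does not establish the lemma.
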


\begin{remark}  In loose language this Lemma asserts that every   fiberwise  automorphism $f\in \Aut (U\setminus C)$ may be extended to  an automorphism of $U$ if $C$ has only  ``$p-$horizontal "  components  over $V.$ \end{remark}

\begin{proof}

 Take any smooth projective closure $\ov V$  of $V$ and choose such a smooth projective closure $\ov U$ of $U$
that the rational extension $\ov p: \ov U\to \ov V$  of $p$ is a morphism. Since all the fibers of $p$ are projective and irreducible, 
 we have 
$\ov  p^{-1}(V)=U,  \ \ov  p^{-1}(\ov V\setminus V)=\ov U\setminus U$  and $\ov p  \bigm  |_U=p$
(see, for example, \cite[Section 2.6] {MuOd}). 
Let  $\ov f: \ov U\dasharrow \ov U$ be the rational extension of $f.$ 
Let $(\tilde U', \tilde f,\pi)$ be a resolution of indeterminacy of $\ov f.$ Let  $\ov {g}_f\in \Bir(\ov V)$ be an  extension of $g_f.$

We have  a commutative diagram 
\begin{equation}\label{diagram00}
\begin{aligned}
& &   & \tilde U '       &  {}  & {}        &   & & \notag \\
& &\pi& \downarrow  &        {\searrow}                     &  \tilde f  & & &\notag \\
& & &\ov U        &\stackrel{\ov f}{\dashrightarrow } &\ov U   & & &\notag \\
&  & \ov p& \downarrow  &                              &  \downarrow &\ov p& &\notag \\
& &  &\ov V            &\stackrel {\ov g_f}{\dashrightarrow} &\ov V  &&    &        \end{aligned},  \end{equation}

Since $g_f$ is an automorphism of $V$, we 
have 
$$\tilde  U:=(\ov p\circ\pi)^{-1}(V)=(\ov p\circ\tilde f)^{-1}(V)=\pi^{-1}(U)$$
 and 
 we may restrict 
the maps  $\pi,  \ \tilde f,  \ \ov f, \ \ov p, \ \ov {g}_f$
to quasiprojective varieties  $\tilde U, U,$ and $V$ and obtain the following commutative diagram:
\begin{equation}\label{diagram}
\begin{aligned}
& &   & \tilde  U        &  {}  & {}        &   & & \notag \\
& &\pi& \downarrow  &        {\searrow}                     &  \tilde f|_{\tilde U}  & & &\notag \\
& & &U        &\stackrel{\ov f|_U }{\dashrightarrow } &U   & & &\notag \\
&  & p& \downarrow  &                              &  \downarrow & p& &\notag \\
& &  &V            &\stackrel {g_f}{\longrightarrow} &V  &&    &        \end{aligned},  \end{equation}


Here
  \begin{itemize}\item  $\pi$ and $\tilde f:=\tilde f|_{\tilde U}$ are morphisms; 
\item   $f:=\ov f|_U \in \Aut(U\setminus C)\cap \Bir(U);$  
\item   $\pi$  is an isomorphism of $U_1:=\pi^{-1}(U\setminus C)$ to $U\setminus C.$ 
\end{itemize}

We have to show that $f $ is defined at all points of $C$.  For this, we need to check that $\tilde
f(\pi^{-1}(c))$ is a point for every point $c\in C.$
Since $\pi$ and $\tilde f$ are birational morphisms, the sets $\tilde f^{-1}(a)$  and $ \pi^{-1}(a)$
are  connected for every point $a\in U$ by the Zariski Main Theorem  (see \cite{MumfordRed},  Chapter III, \S 9).  Take   $a\in U\setminus C.$  Then $\tilde f^{-1}(a)$ contains an isolated point  $\pi^{-1}(f^{-1}(a))\in U_1,$  which 
(by the Zariski Main Theorem)
is the only  connected component of  $\tilde f^{-1}(a).$  Thus 
$\tilde f^{-1}(U\setminus C)=U_1, \ \tilde f^{-1}(C)=\pi^{-1}(C), $  or
$ \tilde f(\pi^{-1}(C))=C, \ \tilde f (U_1)=U\setminus C.$  Hence for every point $c\in C$  we have 
$$\tilde f(\pi^{-1}(c))\subset C\cap P_{g_f(p(v))}$$ and the latter is a finite set.  Since $\tilde f(\pi^{-1}(c))$ has to be irreducible, it is 
a single point.  Thus $f=\tilde f\circ \pi^{-1}$ is defined at every point of $U.$
\end{proof}

\begin{Lemma}\label{sides} Assume that a group $G$ sits in the short exact sequence
$$\{0\}\to  G_1 \to G\to G_2\to \{0\}.$$ 
Suppose that one of the following two condition holds.

\begin{itemize}
\item[(1)]
 $G_1$ is bounded and  $G_2$ is strongly Jordan.
\item[(2)]
$G_1$ is strongly Jordan  and $G_2$ is bounded.
\end{itemize}

Then $G$ is strongly Jordan.\end{Lemma}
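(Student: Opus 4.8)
The plan is to verify separately the two defining properties of ``strongly Jordan'': that $G$ is Jordan and that $G$ is quasi-bounded. Quasi-boundedness will be the easy half: a bounded group is automatically quasi-bounded (a finite abelian group of order at most $C$ is generated by at most $\log_2 C$ elements), while a strongly Jordan group is quasi-bounded by Definition \ref{groupproperty}(c); hence in each of the cases (1) and (2) both $G_1$ and $G_2$ are quasi-bounded, and Remark \ref{exactR}(i) then gives that $G$ is quasi-bounded. The rest of the plan is devoted to showing that $G$ is Jordan. Fix a finite subgroup $\mathcal{B}\le G$, let $\pi\colon G\to G_2$ denote the projection, and put $\mathcal{B}_1:=\mathcal{B}\cap G_1\trianglelefteq\mathcal{B}$ (a finite subgroup of $G_1$) and $\mathcal{B}_2:=\pi(\mathcal{B})\le G_2$ (a finite subgroup of $G_2$), so that $[\mathcal{B}:\mathcal{B}_1]=|\mathcal{B}_2|$. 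We must produce a normal abelian subgroup of $\mathcal{B}$ whose index is bounded by a constant depending only on $G$.

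In case (2) I would argue directly. Since $G_1$ is Jordan, $\mathcal{B}_1$ has a normal abelian subgroup $\mathcal{A}_1$ with $[\mathcal{B}_1:\mathcal{A}_1]\le J_{G_1}$, and since $G_2$ is bounded, $[\mathcal{B}:\mathcal{B}_1]=|\mathcal{B}_2|\le C_{G_2}$. As $\mathcal{B}_1\trianglelefteq\mathcal{B}$, for every $b\in\mathcal{B}$ the conjugate $b\mathcal{A}_1 b^{-1}$ is again a subgroup of $\mathcal{B}_1$ of index $[\mathcal{B}_1:\mathcal{A}_1]$, and since $\mathcal{B}_1$ normalizes $\mathcal{A}_1$ there are at most $[\mathcal{B}:\mathcal{B}_1]\le C_{G_2}$ distinct such conjugates. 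Hence $\mathcal{A}:=\bigcap_{b\in\mathcal{B}}b\mathcal{A}_1 b^{-1}$ is $\mathcal{B}$-invariant (so normal in $\mathcal{B}$), abelian (a subgroup of $\mathcal{A}_1$), and satisfies $[\mathcal{B}_1:\mathcal{A}]\le J_{G_1}^{\,C_{G_2}}$; therefore $[\mathcal{B}:\mathcal{A}]\le C_{G_2}\,J_{G_1}^{\,C_{G_2}}$, which is the desired uniform bound.

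Case (1) is the substantive one. Here $|\mathcal{B}_1|\le C_{G_1}$, and since $G_2$ is strongly Jordan $\mathcal{B}_2$ has a normal abelian subgroup $\mathcal{A}_2$ with $[\mathcal{B}_2:\mathcal{A}_2]\le J_{G_2}$. I would first pass to the centralizer $\mathcal{C}:=C_{\mathcal{B}}(\mathcal{B}_1)$, which is normal in $\mathcal{B}$ (as $\mathcal{B}_1\trianglelefteq\mathcal{B}$) and, via the conjugation homomorphism $\mathcal{B}\to\Aut(\mathcal{B}_1)$, satisfies $[\mathcal{B}:\mathcal{C}]\le|\Aut(\mathcal{B}_1)|\le(C_{G_1}-1)!$; then to $\mathcal{E}:=\mathcal{C}\cap(\pi|_{\mathcal{B}})^{-1}(\mathcal{A}_2)$, which is normal in $\mathcal{B}$ with $[\mathcal{B}:\mathcal{E}]\le(C_{G_1}-1)!\,J_{G_2}$. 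Writing $N:=\mathcal{E}\cap G_1=\mathcal{E}\cap\mathcal{B}_1$, we have $|N|\le C_{G_1}$, and $N$ is centralized by all of $\mathcal{E}$ (because $\mathcal{E}\le\mathcal{C}$), so $N\le Z(\mathcal{E})$; moreover $\mathcal{E}/N\cong\pi(\mathcal{E})\le\mathcal{A}_2$ is abelian, hence $[\mathcal{E},\mathcal{E}]\le N\le Z(\mathcal{E})$ and $\mathcal{E}$ is nilpotent of class at most $2$.

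The main obstacle is now visible: a finite group whose commutator subgroup has bounded order need not possess an abelian subgroup of bounded index (extraspecial $2$-groups are the standard counterexample), so $|N|\le C_{G_1}$ alone does not suffice, and this is exactly the point at which quasi-boundedness of $G_2$ is indispensable. Since $\pi(\mathcal{E})$ is a finite abelian subgroup of $G_2$, it is generated by at most $a:=a(G_2)$ elements; lifting a generating set to $g_1,\dots,g_a\in\mathcal{E}$ we obtain $\mathcal{E}=\langle g_1,\dots,g_a,N\rangle$ with $N$ central. Because $[\mathcal{E},\mathcal{E}]$ is central, the map $\varphi\colon\mathcal{E}\to N^a$ sending $x$ to $([x,g_1],\dots,[x,g_a])$ is a group homomorphism, and its kernel is precisely the set of elements commuting with all the $g_i$ and with the central subgroup $N$, i.e.\ $\ker\varphi=Z(\mathcal{E})$. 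Hence $[\mathcal{E}:Z(\mathcal{E})]\le|N|^a\le C_{G_1}^{\,a}$, and since $Z(\mathcal{E})$ is abelian and characteristic in $\mathcal{E}\trianglelefteq\mathcal{B}$ it is normal in $\mathcal{B}$ with $[\mathcal{B}:Z(\mathcal{E})]\le(C_{G_1}-1)!\,J_{G_2}\,C_{G_1}^{\,a(G_2)}$, a bound depending only on $G$. Combined with the quasi-boundedness established in the first paragraph, this shows that $G$ is strongly Jordan in both cases.
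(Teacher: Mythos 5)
Your proof is correct, but it is genuinely more self-contained than the paper's, which disposes of both cases by citation: case (1) is exactly the content of Klyachko's lemma \cite[Lemma 2.1]{BZ16}, and case (2) is handled by combining Remark \ref{exactR} with \cite[Lemma 2.3(1)]{MengZhang}. What you have done in case (1) is essentially reprove Klyachko's lemma from scratch: passing to the centralizer of $\mathcal{B}_1$ and to the preimage of $\mathcal{A}_2$, observing that the resulting $\mathcal{E}$ is nilpotent of class $\le 2$ with $[\mathcal{E},\mathcal{E}]\le N\le Z(\mathcal{E})$ and $|N|\le C_{G_1}$, and then using the commutator homomorphism $x\mapsto([x,g_1],\dots,[x,g_a])$ into $N^{a}$ to bound $[\mathcal{E}:Z(\mathcal{E})]$ by $C_{G_1}^{a(G_2)}$ --- this is the standard proof, and your remark that quasi-boundedness of $G_2$ is exactly what blocks the extraspecial-group counterexample correctly isolates where strong (as opposed to plain) Jordanness of $G_2$ is used. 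In case (2) your normal-core argument ($\mathcal{A}=\bigcap_b b\mathcal{A}_1b^{-1}$, with at most $C_{G_2}$ distinct conjugates because $\mathcal{B}_1$ normalizes $\mathcal{A}_1$) is an elementary and entirely adequate substitute for the Meng--Zhang lemma, at the cost of a worse constant $C_{G_2}J_{G_1}^{C_{G_2}}$; the quasi-boundedness half of both cases is handled exactly as in the paper, via Remark \ref{exactR}(i). So the mathematics checks out; the only difference is that you supply explicit arguments (with explicit, if crude, bounds) where the authors defer to the literature.
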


\begin{proof}
Suppose (1) holds.  Then a lemma of Anton Klyachko \cite[Lemma 2.1]{BZ16} implies that $G$ is strongly Jordan.

Suppose (2) holds. Then both $G_1$ and $G_2$ are quasi-bounded. By Remark \ref{exactR}, $G$ is also quasi-bounded.
It follows from \cite[Lemma 2.3(1)]{MengZhang} that $G$ is Jordan. This implies that $G$ is strongly Jordan.

\end{proof}

\begin{remark}
\label{MZstrong}
Let $X$ be a projective variety. It is actually proven in \cite{MengZhang} that $\Aut(X)$ is strongly Jordan (not just Jordan):
the assertion follows readily from the combination of \cite[Theorem 1.4]{MengZhang}and \cite[Lemma 2.5]{MengZhang}.
\end{remark}

In the next Proposition   we   consider  the group $\Aut_p(X)$ where $p:W\to A $ is a morphism from a smooth 
quasiprojective variety $W$  with projective fibers
and $A$ is a  smooth  rigid  projective variety.

 \begin{Proposition}\label{m0}  Suppose that    $A$ is a smooth  rigid  projective variety of positive dimension.  
Let X be  a smooth 
irreducible
projective variety and $p:X\to A$ morphism such that the generic fiber
  (and, hence, the fiber over  a general point $a\in A$)  is connected. 
 Let $S\subsetneq A$ be a  closed subset of $A.$ Put $Z=p^{-1}(S)$ and $W=X\setminus Z.$  
Then the group $H=\Aut_p(W)$ is 
strongly 
Jordan.
\end{Proposition}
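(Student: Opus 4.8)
The plan is to analyse $H$ through its actions on the base $A$ and on the (geometric) generic fibre of $p$, and then to combine the two.

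First, for the base: put $V=A\setminus S$, so $p$ restricts to a proper morphism $p_W\colon W\to V$, and every $f\in H$ carries a base automorphism $g_f\in\Aut(V)$. Viewing $g_f$ as a birational self-map of $A$ and using that $A$ is rigid, both $g_f$ and $g_f^{-1}$ extend to morphisms $A\to A$; hence $g_f$ extends to $\bar g_f\in\Aut(A)$, necessarily with $\bar g_f(S)=S$, and $f\mapsto\bar g_f$ is a homomorphism $\rho\colon H\to\Aut(A)$. Since $A$ is projective, $\Aut(A)$ is strongly Jordan by Remark~\ref{MZstrong}, hence so is the subgroup $\rho(H)$.

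Next, for the fibres: the kernel $N=\ker\rho$ consists of the automorphisms of $W$ preserving every fibre $p^{-1}(a)$, $a\in V$. Restriction to the generic fibre gives an injective homomorphism from $N$ into the automorphism group of the generic fibre of $p$ over $K:=k(A)$ — injective because $W$ is irreducible and the generic fibre spreads out over a dense open subset of $W$ — and base change to $\overline K$ gives an injection of $N$ into $\Aut(X_{\overline K})$, where $X_{\overline K}$ is the geometric generic fibre. Now $X_{\overline K}$ is smooth and projective over the algebraically closed characteristic-zero field $\overline K$ and has finitely many connected components, so, up to a bounded extension permuting the components, $\Aut(X_{\overline K})$ is a finite direct product of automorphism groups of smooth projective varieties. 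Each such factor is strongly Jordan by Remark~\ref{MZstrong}; since finite direct products of strongly Jordan groups are strongly Jordan and bounded extensions of strongly Jordan groups are strongly Jordan (the latter by Lemma~\ref{sides}), $N$ is strongly Jordan.

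Finally, I must show that the extension $\{0\}\to N\to H\to\rho(H)\to\{0\}$, with both ends strongly Jordan, is itself strongly Jordan; since an arbitrary extension of this kind need not even be Jordan (Remark~\ref{exactR}(ii)), this step has to use the geometry, the point being that the ``discrete parts'' are bounded. As $A$ has no rational curves, the connected group $\Aut^{0}(A)$ contains no $\mathbb{G}_a$ and no $\mathbb{G}_m$, so by Chevalley's structure theorem it is an abelian variety $\mathcal A$; moreover $\Aut(A)/\mathcal A$ acts with finite kernel on the cohomology of $A$ and so is bounded (Minkowski). Putting $H_{1}:=\rho^{-1}(\mathcal A\cap\rho(H))$, the quotient $H/H_{1}$ embeds into $\Aut(A)/\mathcal A$ and is bounded, so by Lemma~\ref{sides} it suffices to prove $H_{1}$ strongly Jordan. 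Now $H_{1}$ sits in $\{0\}\to N\to H_{1}\to C_{1}\to\{0\}$ with $C_{1}\le\mathcal A(k)$ commutative and quasi-bounded (being contained in the group of $k$-points of an abelian variety), and one obtains $H_{1}$ — hence $H$ — strongly Jordan by decomposing $N$ along the Chevalley decomposition of $\Aut^{0}(X_{\overline K})$: the linear part is handled by the classical theorem of Jordan, the abelian-variety part contributes only commutative quasi-bounded groups, and quasi-boundedness is tracked via Remark~\ref{exactR} in order to apply Lemma~\ref{sides}. This last step is the genuinely delicate one — strong Jordanness of $N$ and of $\rho(H)$ separately does not suffice — and the essential input is that $N$ lies in the group of points of a connected algebraic group while $C_{1}$ lies in an abelian variety, so that every relevant finite subgroup has a characteristic abelian subgroup of index bounded only in terms of $\dim X$ and $\dim A$; this is what lets the extensions pass through Lemma~\ref{sides} without meeting the obstruction of Remark~\ref{exactR}(ii).
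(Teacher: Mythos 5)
Your setup --- the homomorphism $\rho\colon H\to\Aut(A)$ landing in the stabilizer of $S$, the kernel $N$ embedding into the automorphism group of the (projective) generic fibre and hence strongly Jordan by Meng--Zhang, and the reduction modulo the abelian variety $\Aut^{0}(A)$ so that only the extension $\{0\}\to N\to H_{1}\to C_{1}\to\{0\}$ with $C_{1}$ inside an abelian variety remains --- matches the paper's starting point, and you correctly identify that this last extension is where all the difficulty sits. But your resolution of it is a genuine gap, not a proof. Lemma~\ref{sides} needs one of the two ends to be \emph{bounded}; here $N$ is in general unbounded (its image in the automorphism group of the generic fibre can contain a positive-dimensional linear group, e.g.\ $\PGL_2$ when the fibre is $\BP^1$) and $C_{1}$ is unbounded (torsion of an abelian variety), so neither case of the lemma applies. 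Your claimed ``essential input'' --- that finite subgroups of groups of points of connected algebraic groups automatically have abelian subgroups of index bounded by the dimension --- is false in the form you need it: the theta groups of \cite{ZarhinPEMS}, which are responsible for the non-Jordanness of $\Bir(A\times\BP^1)$, are finite subgroups of extensions of an abelian variety by $\mathbb{G}_m$ (a connected algebraic group), are themselves extensions of a commutative quasi-bounded group by a commutative quasi-bounded group, and admit no abelian subgroups of bounded index. So ``decomposing $N$ along the Chevalley decomposition'' and invoking Jordan's theorem for the linear part does not close the extension problem; it restates it.

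The paper's way out is a geometric move you are missing: Rosenlicht's theorem \cite{Ros}. Let $G_0$ be the identity component of the stabilizer of $S$ inside $\Aut(A)_0$; it is an abelian variety, its orbits in $A\setminus S$ are projective and disjoint from $S$, and there is a geometric quotient $\pi\colon U\to V$ of a dense open $G_0$-invariant subset $U\subset A\setminus S$ by $G_0$. Replacing $p$ by $\tau=\pi\circ p$ enlarges the fibres to the preimages of whole $G_0$-orbits, which are still \emph{projective} subvarieties of $W$; hence the new kernel $H_T$ lands in the automorphism group of a projective generic fibre and is strongly Jordan by \cite{MengZhang}, while the new quotient $H_V$ embeds into $G(S)/G_0$, which is \emph{bounded}. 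Now Lemma~\ref{sides}(2) does apply. In other words, the abelian-variety part of the base action --- precisely the piece your argument cannot handle --- is absorbed into the fibre direction, where Meng--Zhang's theorem for projective varieties controls its interaction with the linear part. Without this (or an equivalent device) the final step, and hence the proof, is incomplete.
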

\begin{remark}\label{irreducible} Let $A_r$ be the set of all points $a\in A\setminus S$ such that   the fiber $p^{-1}(a)$ is  smooth  (hence, 
 irreducible).  Then $W_r:=p^{-1}(A_r)$ is evidently $H-$invariant and  $H$ is embedded in $\Aut(W_r).$  Thus while proving the Proposition we  may assume that for every point   $a\not\in S$  the fiber $p^{-1}(a)$ is irreducible.\end{remark}

\begin{proof}
If $S=\emptyset$ then $W=X$ is projective and the desired result follows from  
results of  \cite   {MengZhang} (and Remark \ref{MZstrong}). 
Thus we assume that $S\ne\emptyset.$  Then
 \begin{itemize}\item We denote by
  $G(A):=\Aut(A)$  be  the group of automorphisms  of $A.$  
 \item   We denote by  $G(S)\subset G(A)$  the subgroup of all elements $g\in G(A)$  such that  $g(S)=S;$
\item The identity
  component $G(A)_0$  of $G(A)$  
   is a connected algebraic group (\cite[Corollary 2]{Mat}) ;   
\item 
The intersection
$G_S=G(S) \cap G(A)_0$ is a closed subgroup of  $G(A)_0$, because $S$ is a closed subset of $A;$
\item   The identity component  $G_0$ of $G_S$ is a closed subgroup  in $G_S,$  thus it is a  connected  algebraic group, and
 has  finite index in $G_S;$
\item  The  factor group $G(S)/(G_S)$ is bounded ( \cite[Lemma  2.5]{MengZhang});
\item Hence,   the group $G(S)/G_0$ is bounded;
\item Since $G_0$  acts on a non-uniruled projective variety $A, $  it contains no non-trivial  connected linear algebraic
subgroup  (otherwise, the open dense subset of $A$  would be covered by rational orbits).
Thus it is isomorphic to an abelian variety  by the Chevalley's Theorem (\cite {C}).
\end{itemize}

By definition, for  every automorphism $f\in \Aut_p(W)$ there is  $g_f\in \Aut(A\setminus S)$  that
 may be included into the following commutative diagram : 

\begin{equation}\label{diagram2}
\begin{CD}
 W @>{f}>>  W  \\
@Vp VV @Vp VV \\
A\setminus S   @>{g_f}>>A\setminus S   \\
\end{CD}
\end{equation}

Hence,   the group  $H=\Aut_p(W)$ sits in the following exact sequence

 \begin{equation}\label{seq0}
0\to H_i\to   H \to  H_a\to 0,\end{equation}
 where 

--$ H_i=\{f\in H \ | \ g_f={\mathrm{id}_A}\}$  is a subgroup of  the  automorphism group of the generic fiber  $\mathcal  W_p$   of $p;$

--$ H_a=\{g\in \Aut(A\setminus S)    \   | \  g=g_f  \    \text {for some} f\in H\}.$

Note that 
we have 
  \begin{itemize} 
\item $H_a\subset G(S),$    since   $\Bir(A)=\Aut(A);$ 
\item  Every $g\in H_a\subset G(S)$ moves a $G_0$-orbit (in $A$) to a $G_0$-orbit,  since $G_0$   is  a closed normal subgroup of $G(S);$
\item  The orbit $G_0(z)$ of a point $z\not\in S$ is a projective subset of $A,$  since $G_0$ is an abelian variety;
 \item The orbit $G_0(z)$ of a point $z\not\in S$  does not meet $S.$ Hence, if $z\not\in S$ then
$p^{-1}(G_0(z))\cap Z=p^{-1}(G_0(z)\cap S)=\emptyset,$  i.e  $p^{-1}(G_0(z))$  is a closed irreducible projective subset of $W.$
 Indeed
 it is a fibration with irreducible projective fibers over  a projective orbit   $G_0 (z)$  ( \cite[Chapter 1, n.6.3, Theorem 8] {Shaf}). 
\end{itemize}

By a theorem of M. Rosenlicht \cite {Ros},  there exist a dense  open $G_0-$invariant subset   $U\subset  A, $ 
a  quasiprojective  variety $V$ and a morphism $\pi:U\to V$ such that a fiber $\pi^{-1}(v)$ is precisely an orbit of $G_0$  for every $v\in V.$
That means that  $V$ is a geometric quotient of $U$ by the $G_0$-action.  Since $S$ is $G_0-$invariant, we may assume that $U\subset A\setminus S.$
Since every $g\in H_a\subset G(S)$ moves a $G_0$-orbit (in $A$) to a $G_0$-orbit, the  map $h_g:=\pi\circ g\circ\pi^{-1}: V\to V$
is defined at every points of $V,$  hence is a morphism (see \cite[Lemma 10.7 on pp. 314--315]{Itaka}) .
Moreover, 
the following  diagram  commutes.

\begin{equation}\label{diagram22}
\begin{CD}
 W @>{f}>>    W  \\
@Vp VV @Vp VV \\
U   @>{g_f}>>U   \\
@V\pi VV @V\pi VV \\
V @>{h_{g_f}}>>V
\end{CD}
\end{equation}

  Let $\tau=\pi\circ p.$   We have $f\in \Aut_{\tau} (W).$
Since the general  fiber  $T_v=\tau^{-1}(v), v\in  V $ is a  projective irreducible variety,   the generic fiber 
$ \mathcal  {T} $ of $\tau$ is projective and irreducible as well  (\cite [Proposition 9.7.8.]{EGA43}).


Moreover, we have the following exact sequence of groups
 \begin{equation}\label{seq2}
0\to  H_T\to H\to H_V\to 0,\end{equation}
 where 

--$H_T=\{f\in H \ | \ h_{g_f}={\mathrm{id}_V}\};$
--$H_V=\{h\in \Aut(V) \ \   | \  h=h_{g_f}  \    \text {for some} \  f\in H\}$.
 (In particular case of $G_0=\{\mathrm{id}_A\}$  we have   $V=A\setminus S, $  $\pi=\mathrm{id}_A,$  $h_{g_f}=g_f ,$ and $H_V=H_a\subset G_S.$) 
The group $H_T\subset \Aut(\mathcal  {T})$ is strongly  Jordan, according to \cite
[Theorem 1.4, Lemma 2.5]{MengZhang} 
(and Remark \ref{MZstrong}). 
 The group $H_V$ is isomorphic to a subgroup of $ G(S)/G_0,$ hence  is bounded.  
Therefore, by \lemref{sides},       $H$ is  strongly  Jordan.

\end{proof}

\section{Admissible triples and related exact sequences}
\label{s2}
Let $n$ be a positive integer and $A$ be a $n$-dimensional irreducible smooth rigid projective variety
(e.g,  an abelian  variety or a product of curves of positive genus).  We write $\mathcal{K}$ for $k(A)$.

Let us define an $A$-{\sl admissible triple} as a triple $(X, \phi,Z )$  that consists of a smooth irreducible  projective variety $X$,  a birational isomorphism $\phi :X\dasharrow   A\times \BP^1$ and a  closed subset $Z \subsetneq X$.  We denote by $W$ the  open subset
$$W=X\setminus Z\subset X.$$

 We will freely use the following  notation and properties of admissible $A$-triples.

\bigskip

\begin{description}

\item[a]  Let  $p_A:A\times \BP^1\to A$  be the projection map on the first factor. Then the composition
$p:=p_A\circ \phi:X\to A$ 
is a morphism,
since $A$ is rigid.
We say that $p$ is induced by $\phi.$

\item[b] Since 
$X$ is birational to $A\times\BP^1$,  there is an open non-empty subset $B\subset A$  such that $\phi$ induces an isomorphism between $X_B=p^{-1}(B)$ and $B\times\BP^1.$ 
(This follows from the fact that indeterminancy locus of $\phi$ has codimension $\le 2$ in $X, $  thus it is mapped by $p$ into a proper closed subset  of $A.$)

Moreover,  $\phi$ is $p,p_A-$fiberwise:  the following diagram  commutes.
 \begin{equation}\label{diagram0}
\begin{CD}
X_B  @>{\phi}>>  B\times\BP^1   \\
@Vp VV @Vp_A VV \\
B @>{id}>> B
\end{CD};
\end{equation}
\bigskip
  \item[c]  It follows from \eqref{diagram0} that the general fiber $P_x:=p^{-1}(x)$
 (i.e. fiber over a point  $x$ of a certain open dense subset of $A$)
 is isomorphic to $\BP^1;$

\item[d] Let us put:\begin{itemize}\item
$r(Z)$ - the number of irreducible over $\mathcal{K}$ components of $Z$ that are mapped dominantly onto $A;$  we will call such components "horizontal";
\item
$m(Z)$- the degree of the restriction of $p$ to $Z,$  i.e the number of 
 $\overline {\mathcal K}$-points in $p^{-1}(a)$
for a general point $a\in A.$\end{itemize}

\item[e]
The generic fiber $\mathcal X_p$  of $p$  is isomorphic to  the projective line $\BP^1_{\mathcal{K}}$ over $\mathcal{K}$; the generic fiber  $\mathcal W_p$  of the  restriction $p\bigm |_W \to A$ (of $p$ to $W)$   is isomorphic to $\BP^1_{\mathcal{K}}\setminus M, $  where $M$ is a finite set that is defined over $\mathcal{K}$ and consists of
$m(Z)$
points that are  defined over a finite algebraic extension of  $\mathcal{K}$. In other words, 
 the $\mathcal{K}$-variety $\mathcal W_p$ is isomorphic to the projective line over $\mathcal{K}$ with $m(Z)$ punctures. In particular, the group $\Aut_{\mathcal{K}}(\mathcal W_p)$ is finite if $m(Z) >2$. On the other hand, $r(Z)$ is the number of Galois orbits in the set of punctures. In particular,
$$1 \le r(Z)\le m(Z) \ or \ 0=  r(Z)=m(Z).$$

\item[f]
We may choose  $B$ in such a way that $Z_B:=Z\cap X_B$ 
meets every fiber $P_b,  \ b\in B,$ at precisely  $m(Z)$    $\overline {\mathcal K}-$ points.
  In particular, $Z_B$ is a finite cover of $B$.

\item[g] 
 Every birational map  $f\in \Bir(X)$ is $p-$fiberwise : we denote by  $g_f\in \Bir(A)$ the corresponding  automorphism $g_f:A\dasharrow A$ (see  \cite{BZ16}).
Since $A$ is rigid, $g_f$ actually belongs to $\Aut(A)$.  This implies that $$\Aut(W)=\Aut_p(W).$$
(Here $p$ denotes the restriction of $p: X \to A$ to $W\subset X$.)

\item [h]  Let us consider the subgroups
$$H_i=\{f\in \Aut(W) \ |   \ g_f=\mathrm{id}_A\}\subset \Aut(W)$$ 
 and 
 $$H_a=\{g\in \Aut(A)  \ |  \ g=g_f\text{  for some} \  f\in \Aut(W)\} \subset \Aut(A).$$ 
We have the following short exact sequence of groups.
\begin{equation}\label{seq1}
0\longrightarrow H_i\longrightarrow \Aut(W)\longrightarrow H_a\longrightarrow 0\end{equation}

\item[j]  Group  $H_i$ is isomorphic to a subgroup of $\Aut_{\mathcal{K}}(\mathcal   W_p).$   Thus it is Jordan;   it   is finite if $m(Z)>2$.

\end{description}

\begin{remark}
\label{redtriple}
Let $W$ be a smooth  quasiprojective irreducible variety that is birational to $A\times \BP^1$.  Then there is an $A$-admissible triple $(X, \phi,Z )$ such that $X\setminus Z$ is biregular to $W$. Indeed, one may take as $X$ any smooth projective closure of $W$ and put $Z=X\setminus W$.

\end{remark}

\begin{Lemma}  Suppose that $A$ is an irreducible smooth projective  variety that is not uniruled. (E.g., $A$ is rigid).
Then $\Bir(A\times\BP^1)$ is quasi-bounded.
\end{Lemma}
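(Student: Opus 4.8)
The plan is to exhibit $\Bir(A\times\BP^1)$ as an extension of two groups that are manifestly quasi-bounded, and then quote Remark~\ref{exactR}. First I would observe that, since $A$ is not uniruled, the projection $p_A\colon A\times\BP^1\to A$ is preserved by every birational self-map, by the same argument as in item \textbf{g} above (cf. \cite{BZ16}): if some $f\in\Bir(A\times\BP^1)$ failed to carry the general fibre $\{a\}\times\BP^1$ into a fibre of $p_A$, then the curves $p_A\bigl(f(\{a\}\times\BP^1)\bigr)$ would sweep out $A$ and would thus form a covering family of rational curves on $A$ --- impossible. Hence every $f$ is $p_A$-fiberwise, the assignment $f\mapsto g_f$ is a surjective group homomorphism $\Bir(A\times\BP^1)\to\Bir(A)$ (lift $g$ to $(g,\mathrm{id}_{\BP^1})$), and its kernel is $\Bir_{p_A}(A\times\BP^1)=\Aut_{\mathcal K}(\BP^1_{\mathcal K})=\PGL_2(\mathcal K)$, the group of $\mathcal K$-automorphisms of the generic fibre (as in item \textbf{e}). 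Thus
\[
1\longrightarrow \PGL_2(\mathcal K)\longrightarrow \Bir(A\times\BP^1)\longrightarrow \Bir(A)\longrightarrow 1 .
\]

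By Remark~\ref{exactR}, quasi-boundedness is inherited by extensions, so it suffices to check that $\PGL_2(\mathcal K)$ and $\Bir(A)$ are quasi-bounded. The first is immediate: every finite abelian subgroup of $\PGL_2$ over a field of characteristic zero is a product of at most two cyclic groups (e.g. by the classical theorem of Jordan applied to $\GL_2(\mathcal K)$), so $\PGL_2(\mathcal K)$ is quasi-bounded with constant $2$.

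The substantive point is the quasi-boundedness of $\Bir(A)$, and this is where non-uniruledness enters. One may simply invoke the known fact that $\Bir(X)$ is strongly Jordan --- in particular quasi-bounded --- for every non-uniruled variety $X$ (\cite{PS1} together with \cite{Bir}). Alternatively, a direct argument: given a finite abelian $\mathcal B\subseteq\Bir(A)$, regularise the action to obtain $\mathcal B\subseteq\Aut(A')$ for a smooth projective $A'$ birational to $A$, still non-uniruled with $q(A')=q(A)=:q$. The Albanese morphism $\mathrm{alb}\colon A'\to\Alb(A')$ is equivariant, giving a homomorphism $\mathcal B\to\Aut(\Alb(A'))=\Alb(A')\rtimes\Aut_{\mathrm{gp}}(\Alb(A'))$; its image is a finite abelian group whose projection into $\Aut_{\mathrm{gp}}(\Alb(A'))\subseteq\GL_{2q}(\BZ)$ has order bounded by Minkowski's theorem and whose intersection with the torsion of the $q$-dimensional abelian variety $\Alb(A')$ is generated by at most $2q$ elements, hence is generated by a number of elements depending only on $q$. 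The kernel of this homomorphism acts faithfully on a general fibre of $\mathrm{alb}$ (a smooth projective non-uniruled variety, of strictly smaller dimension when $q>0$), so one concludes by induction on $\dim A$; the case $q(A)=0$ is treated by minimal model theory, using that the relevant numerical invariants are constant along a birational class, or again by \cite{PS1},\cite{Bir}. Feeding this back into Remark~\ref{exactR} yields the Lemma.

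The hard part is exactly the quasi-boundedness of $\Bir(A)$. After regularising $\mathcal B$ on a model $A'$ one knows that $\Aut(A')$ is strongly Jordan by \cite{MengZhang} (Remark~\ref{MZstrong}), but the resulting bound depends a priori on the chosen model $A'$; the real work is to extract a bound that is \emph{uniform} over all birational models of $A$, which is what the Albanese reduction --- controlled by Minkowski's theorem and the birational invariance of $q$ --- accomplishes.
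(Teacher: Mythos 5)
Your proof is correct and takes essentially the same route as the paper: the same short exact sequence $1\to\PGL(2,k(A))\to\Bir(A\times\BP^1)\to\Bir(A)\to 1$ (fiberwiseness via non-uniruledness of $A$, surjectivity by lifting $g$ to $(g,\mathrm{id})$), with $\PGL(2,k(A))$ handled by Jordan's theorem and $\Bir(A)$ quasi-bounded by the cited Prokhorov--Shramov/Birkar results for non-uniruled varieties, all fed into Remark~\ref{exactR}. Two minor quibbles: the kernel is the subgroup of fiberwise maps inducing $\mathrm{id}_A$ on the base, not $\Bir_{p_A}(A\times\BP^1)$ (which in the paper's notation is the whole group); and your sketched Albanese alternative is unnecessary and, as written, has gaps (faithfulness of the kernel's action on a general Albanese fibre and the $q=0$ case are not justified), so it is best to rely on the citation as the paper does.
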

\begin{proof}
Let $p_A: A\times\BP^1\to A$ be the projection map. Its generic fiber $\mathcal{X}$ is the projective line $\BP^1_{k(A)}$ over $k(A)$.
Each $u \in \Bir(A\times\BP^1)$ is a $p_A$-fiberwise, see \cite[Lemma 3.4 and Cor. 3.6]{BZ16}.  By \cite[Cor. 3.6]{BZ16}, 
$\Bir(A\times\BP^1)$ sits in an exact sequence
$$\{0\} \to \Bir_{k(A)}(\mathcal{X}) \to \Bir(A\times\BP^1) \to \Bir(A).$$
Actually, $\Bir(A\times\BP^1) \to \Bir(A)$ is surjective, because one may lift any birational automorphism of $A$ to a birational automorphism of $A\times\BP^1$.
On the other hand, since $\mathcal{X}$ is the projective line, $\Bir_{k(A)}(\mathcal{X})$ is the projective linear group $\PGL(2,k(A))$. This gives us a short exact sequence
\begin{equation}
\label{ARexact}
\{0\} \to \PGL(2,k(A)) \to \Bir(A\times\BP^1) \to \Bir(A) \to \{0\}.
\end{equation}
The theorem of Jordan implies that the linear group $\PGL(2,k(A))$ is strongly Jordan. In particular, it is quasi-bounded. On the other hand, since $A$ is {\sl not} uniruled,
$\Bir(A)$ is also quasi-bounded (\cite[Remark 6.9]{PS1}, \cite[ Proof of Cor. 3.8 on p. 236]{BZ16}.  It follows from \eqref{ARexact} and Remark \ref{exactR} that $\Bir(A\times\BP^1)$ is also quasi-bounded.
\end{proof}

\begin{Lemma}\label {field} Assume that  $m(Z)=2$   and $r(Z)=1. $ Let $\mathrm{Tor}(H_i)$
be the set of all elements of $H_i$ of finite order.
 Then the following conditions hold.
 \begin{itemize}
 \item[(i)]
 $\mathrm{Tor}(H_i)$ consists of, at most, 4 elements.
 \item[(ii)]
  every  element of finite order in  $H_i$ has order 1 or 2. 
  \item[(iii)]
  Every finite subgroup of $H_i$ is abelian and its order divides 4 while its exponent divides 2.
  \end{itemize}
\end{Lemma}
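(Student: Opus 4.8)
The key point is the concrete description of $H_i$ provided in item (e) and (j): since $m(Z)=2$, the generic fiber $\mathcal W_p$ is the projective line over $\mathcal K$ with two punctures, and $H_i$ embeds into $\Aut_{\mathcal K}(\mathcal W_p)$, i.e.\ into the subgroup of $\PGL(2,\mathcal K)$ fixing (as a set) the two-point subscheme $M\subset\BP^1_{\mathcal K}$. Because $r(Z)=1$, the scheme $M$ is a single Galois orbit: either two $\mathcal K$-points that are conjugate over a quadratic extension $\mathcal K'/\mathcal K$ but not individually defined over $\mathcal K$, or (if $Z_B\to B$ is ramified) a single $\mathcal K$-point of multiplicity two. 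In either case I would first reduce to the split situation by a change of coordinates over $\ov{\mathcal K}$, normalizing $M=\{0,\infty\}$ (or $M=\{\infty\}$ doubled, in the non-reduced case). The plan is then to write down the stabilizer of $\{0,\infty\}$ in $\PGL(2,\ov{\mathcal K})$ explicitly, namely the group of $z\mapsto az$ and $z\mapsto a/z$, which is $\ov{\mathcal K}^\times\rtimes\BZ/2$; intersecting with the Galois-descent condition coming from $r(Z)=1$ cuts this down dramatically.

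First I would treat the reduced case, $M=\{0,\infty\}$ with the two points swapped by $\Gal(\mathcal K'/\mathcal K)$. An element of $H_i$ either fixes each of $0,\infty$ or swaps them; in the first case it is $z\mapsto az$ for some $a\in\ov{\mathcal K}^\times$, and the requirement that it be defined over $\mathcal K$ (it commutes with the Galois involution, which here acts as $z\mapsto 1/z$ composed with the Galois action on coefficients) forces $a\in\mathcal K^\times$ — such an element has finite order only if $a$ is a root of unity in $\mathcal K^\times$, and since $\mathcal K=k(A)$ has the same roots of unity as the algebraically closed field $k$... wait, that is not bounded. So the correct constraint must be stronger: I would instead argue directly inside $\Aut(W)$, using that a torsion diagonal automorphism $z\mapsto \zeta z$ with $\zeta$ a primitive $n$-th root of unity, $n>2$, would have to be defined over $\mathcal K$ as an automorphism swapping-or-fixing the conjugate pair, and the swap-compatibility (the nontrivial Galois element conjugates $z\mapsto\zeta z$ to $z\mapsto\zeta^{-1}z$, which as an automorphism of $\BP^1$ must coincide with $z\mapsto\zeta z$) forces $\zeta=\zeta^{-1}$, i.e.\ $\zeta^2=1$. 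Thus the fixing-type torsion elements are exactly $z\mapsto\pm z$. Next, the swapping-type elements $z\mapsto b/z$ are automatically of order $2$ (when they have finite order; $(z\mapsto b/z)$ squared is the identity), and Galois-compatibility pins down $b$ up to squares, giving at most finitely many — in fact the quotient $H_i/(\text{fixing part})\hookrightarrow\BZ/2$, so there are at most two cosets. Combining: $\mathrm{Tor}(H_i)\subseteq\{\,z\mapsto\pm z,\ z\mapsto\pm b/z\,\}$ for a fixed $b$, which is at most $4$ elements, all of order dividing $2$, proving (i) and (ii).

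For (iii): any finite subgroup $F\subseteq H_i$ consists of torsion elements, so $F\subseteq\mathrm{Tor}(H_i)$, hence $|F|\mid 4$; and since every element of $F$ has order dividing $2$, $F$ is an elementary abelian $2$-group, so it is abelian with exponent dividing $2$. I should also handle the non-reduced case $m(Z)=2$ with $M$ a doubled $\mathcal K$-point: normalizing it to $\infty$ (doubled), the subgroup of $\PGL(2,\mathcal K)$ fixing the first-order neighborhood of $\infty$ is the group of affine maps $z\mapsto z+c$, $c\in\mathcal K$, which is torsion-free in characteristic zero, so $\mathrm{Tor}(H_i)=\{1\}$ and all three assertions hold trivially.

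**Main obstacle.** The delicate step is the Galois-descent bookkeeping: correctly identifying how $\Gal(\ov{\mathcal K}/\mathcal K)$ (concretely, the quadratic twist recorded by $r(Z)=1$) acts after I normalize coordinates over $\ov{\mathcal K}$, and extracting from ``$f$ commutes with the Galois action and preserves $M$'' the sharp conclusion that $a^2=1$ in the diagonal part and that there is a single coset of swaps. This is where one must be careful not to conflate ``$f$ is defined over $\mathcal K$ as a map of $\BP^1$'' with ``$f$ preserves $M$ pointwise'': the whole force of the hypothesis $r(Z)=1$ (as opposed to $r(Z)=2$) is that the two punctures are \emph{not} individually rational, which is exactly what rigidifies the diagonal torus down to $\{\pm1\}$. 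Once that identification is set up cleanly, the rest is the routine finite computation in the normalizer of a maximal torus in $\PGL_2$.
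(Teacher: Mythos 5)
Your overall strategy is the same as the paper's: move the two punctures to $0$ and $\infty$ by a coordinate change defined over the quadratic extension $\mathcal{K}_2$ of $\mathcal{K}=k(A)$ that splits the single horizontal component, identify the relevant automorphisms with $z\mapsto \lambda z$ and $z\mapsto \lambda/z$, and use rationality over $\mathcal{K}$ (descent through the Galois involution swapping the punctures) to constrain $\lambda$. Your diagonal case is correct and is essentially a cleaner version of the paper's matrix computation: for a torsion element $z\mapsto\zeta z$ the root of unity $\zeta$ lies in $k\subset\mathcal{K}$, so equivariance under ``Galois composed with the swap $z\mapsto 1/z$'' forces $\zeta=\zeta^{-1}$. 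This gives (ii).

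The gap is in the swap case, exactly where you write that ``Galois-compatibility pins down $b$ up to squares, giving at most finitely many.'' What descent actually yields for $z\mapsto b/z$ is the condition $N_{\mathcal{K}_2/\mathcal{K}}(b)=1$, which by Hilbert 90 (or by parametrizing the conic $x^2-dy^2=1$ from its rational point) has infinitely many solutions giving pairwise distinct elements of $\PGL_2(\mathcal{K})$; every one of them is an involution, hence lies in the torsion set. Concretely, for punctures $\pm i$ the map $u\mapsto (1-2u)/(u+2)$ is a rational involution swapping them which is none of $u\mapsto\pm u^{\pm 1}$; the same construction works over $k(A)$ for any nonsquare $d$. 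So the ambient stabilizer of the puncture set has infinitely many torsion elements, and assertion (i) does not follow from the fiberwise analysis alone --- one would need to show that $H_i\subset\Aut(W)$ meets the swap coset in at most a single coset of $\{\pm 1\}$, which you do not do. (The paper's own proof has the same soft spot: it asserts $(\ov f)^2(z)=\lambda^2 z$ ``in both cases,'' which is false for $z\mapsto\lambda/z$, where $(\ov f)^2=\mathrm{id}$ for every $\lambda$, so $\lambda$ need not be a root of unity there.) Note, however, that (ii) and (iii) --- and hence the boundedness of $H_i$, which is all that the proof of the main theorem uses --- do survive: a finite subgroup meets the norm-one torus in a subgroup of $\{\pm 1\}$ and maps to $\BZ/2$, so its order divides $4$ and, by (ii), it is elementary abelian; you should derive (iii) this way rather than from (i). Your extra non-reduced case is unnecessary ($Z$ is reduced, so an irreducible horizontal component of degree $2$ has two distinct geometric points over the generic point), but harmless.
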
 
\begin{proof}   Let $(u_0:u_1)$ be homogeneous coordinates in $\BP^1_{\mathcal{K}}.$ Since $m(Z)=2$, and $Z$ has only one irreducible component $Z_1$  over $\mathcal{K}, $ we may assume that
$Z_1$  is  defined by equation $(u_0-\mu_1u_1)(u_0-\mu_2u_1)=0,$
where 
$\mu_1, \mu_2$  are  distinct elements   of   a quadratic extension $\mathcal{K}_2$ of $\mathcal{K}$ that are conjugate over $\mathcal{K}$.

 Every automorphism $f \in \mathrm{Tor}(H_i)$  of $\mathcal W_p$ may be extended uniquely to a  periodic  automorphism  $\ov f$
of $\mathcal X_p\cong \BP^1_{\mathcal{K}}.$ The 2-element subset 
$$\{\mu_1, \mu_2\}\subset \BA^1(\mathcal{K}_2)\subset \BP^1(\mathcal{K}_2)$$
is $\ov f$-invariant for all $f \in H_i$. This means that either $\ov f$ leaves invariant both $\mu_i$ or permutes them. 

Put   $z=\frac{(u_0-\mu_1u_1)}{(u_0-\mu_2u_1)}\in \mathcal{K}_2(A).$   The extension $\ov f$
leaves the set $\{z=0\}\cup\{z=\infty\}$ invariant for all $f \in H_i$. Thus, $\ov f(z)=\lambda z$ or 
 $\ov f(z)=\frac{\lambda}{ z}$   for suitable nonzero $\lambda \in \mathcal{K}_2.$  In both cases 
$(\ov f)^2(z):=(\ov f\circ\ov f)(z)=\lambda^2z,$
This implies that $\lambda$ is a {\sl root of unity} if $\ov f$ is {\sl periodic}, i.e., if $f\in \mathrm{Tor}(H_i)$; in particular $\lambda \in \mathcal{K}.$ 
Suppose that 
    $\ov f(u_0:u_1)=(u_0':u_1').$
Then one may easily check that    either (in the former case)
\begin{equation}\label{field1}
\frac{(u_0'-\mu_1u_1')}{(u_0'-\mu_2u_1')}=\lambda\frac{(u_0-\mu_1u_1)}{(u_0-\mu_2u_1)}
\end{equation}
or (in the latter case)
\begin{equation}\label{field2}
\frac{(u_0'-\mu_1u_1')}{(u_0'-\mu_2u_1')}=\lambda\frac{(u_0-\mu_2u_1)}{(u_0-\mu_1u_1)}.
\end{equation}

In order for  these maps to be defined over $\mathcal{K}$  the matrices  (respectively) 
$$\begin{pmatrix}\mu_1-\lambda\mu_2 &\mu_1\mu_2(\lambda-1)\\
(1-\lambda) &\lambda\mu_1-\mu_2 \end{pmatrix} \ \text{ and} 
\begin{pmatrix}\mu_1-\lambda\mu_2 &\lambda\mu_2^2-\mu_1^2\\
(1-\lambda) &\lambda\mu_2-\mu_1 \end{pmatrix}$$
should be defined 
 (up to multiplication by a nonzero element of $\mathcal{K}_2$) 
over $\mathcal{K}$  as well.  Since $\lambda\in \mathcal{K}, $ it may happen only if $\lambda=\pm 1.$  
This implies that $(\ov f )^2: z \mapsto \lambda^2 z$ is the identity map, i.e.,  
 the order of $\ov f$ is  either $1$ or $2$. In addition, there are, at most, four   elements   in  $\mathrm{Tor}(H_i).$  Namely, (written in $z-$ coordanate)
$$\ov f(z)=z, \ \ov f(z)=-z, \ov f(z)=\frac{1}{z}, \ \ov f(z)=-\frac{1}{z}.$$
\end{proof}


%
%
%
%
One  may see  \lemref{field}   in a more general way.
Let $\mathcal{K}$ be a field of characteristic zero that contains all roots of unity. Let $n \ge 2$ be an integer.

The following assertion is an easy application of Kummer theory  \cite[Chapter VI, Section 8]{Lang}.

\begin{theorem}
\label{kummer}
Let $u$ be a matrix in $\GL(n,\mathcal{K})$, whose image $\bar{u}$ in $\PGL(n,\mathcal{K})$ has finite order. Suppose that $u$ has an eigenvalue that does not belong to $\mathcal{K}$. Then there is a positive integer $d$ such that $d\mid n$ and all eigenvalues  of $u^d$ lie in $\mathcal{K}$.  In addition, if $n$ is a prime then $\bar{u}$ has order $n$.
\end{theorem}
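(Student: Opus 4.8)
The plan is to translate the hypothesis that $\bar u$ has finite order into the statement that a power of $u$ is a scalar matrix, and then to read off the eigenvalue structure of $u$ via Kummer theory applied to the radical extension generated by a single eigenvalue. Concretely, I would first let $m$ be the order of $\bar u$ in $\PGL(n,\mathcal{K})$, so that $u^m=c\,\mathrm{Id}$ for some $c\in\mathcal{K}^\ast$; hence every eigenvalue $\lambda$ of $u$ (a root in $\overline{\mathcal{K}}$ of the characteristic polynomial $\chi_u\in\mathcal{K}[T]$) satisfies $\lambda^m=c$. Fix, as in the hypothesis, an eigenvalue $\lambda_1\notin\mathcal{K}$. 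For any eigenvalue $\lambda$ of $u$ we get $(\lambda/\lambda_1)^m=1$, so $\lambda/\lambda_1$ is an $m$-th root of unity and thus lies in $\mathcal{K}$; consequently $\mathcal{K}(\lambda)=\mathcal{K}(\lambda_1)=:L$ for \emph{every} eigenvalue $\lambda$ of $u$, and $L\neq\mathcal{K}$.

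Next I would apply Kummer theory \cite[Chapter VI, Section 8]{Lang}: since $\mathcal{K}$ contains a primitive $m$-th root of unity and $\lambda_1^m=c\in\mathcal{K}^\ast$, the extension $L=\mathcal{K}(\lambda_1)$ is cyclic over $\mathcal{K}$ of degree $d:=[L:\mathcal{K}]$ dividing $m$, and a generator $\sigma$ of $\Gal(L/\mathcal{K})$ acts by $\sigma(\lambda_1)=\omega\lambda_1$ with $\omega\in\mathcal{K}$ a root of unity of order exactly $d$; hence $\sigma(\lambda_1^d)=\omega^d\lambda_1^d=\lambda_1^d$, so $\lambda_1^d\in\mathcal{K}$. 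Because $\lambda/\lambda_1\in\mathcal{K}$ for every eigenvalue $\lambda$, it follows that $\lambda^d=(\lambda/\lambda_1)^d\lambda_1^d\in\mathcal{K}$, i.e. all eigenvalues of $u^d$ lie in $\mathcal{K}$. To obtain $d\mid n$, I would factor $\chi_u=\prod_j f_j^{e_j}$ into powers of distinct monic irreducible polynomials over $\mathcal{K}$; each $f_j$ is the minimal polynomial of any of its roots, and that root is an eigenvalue of $u$, hence generates $L$, so $\deg f_j=d$ for all $j$ and $n=\deg\chi_u=d\sum_j e_j$. This gives the first assertion, with $d\ge2$ since $\lambda_1\notin\mathcal{K}$.

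For the last assertion, suppose $n$ is prime; then $d\mid n$ together with $d\ge2$ forces $d=n$, so $\chi_u$, having degree $n$ and an irreducible factor of degree $n$, is irreducible over $\mathcal{K}$, hence separable (characteristic zero), so $u$ is diagonalizable and its $n$ distinct eigenvalues form one Galois orbit $\lambda_1,\omega\lambda_1,\dots,\omega^{n-1}\lambda_1$. Then $u^n$ is diagonalizable with every eigenvalue equal to $(\omega^i\lambda_1)^n=\lambda_1^n$, so $u^n=\lambda_1^n\,\mathrm{Id}$ is scalar, $(\bar u)^n=1$, and $m\mid n$; conversely $\lambda_1^m=c\in\mathcal{K}$ forces $\omega^m=1$, i.e. $d\mid m$, whence $n=d\mid m\mid n$ and $\bar u$ has order exactly $n$. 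I expect the only step needing genuine care to be the Kummer-theoretic input — the cyclicity of $L/\mathcal{K}$ and the fact that a Galois generator scales $\lambda_1$ by a root of unity of order precisely $d$ — while the rest is elementary bookkeeping with the characteristic polynomial.
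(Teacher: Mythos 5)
Your proof is correct and follows essentially the same route as the paper: pass to $u^m=c\,\mathrm{Id}$, observe that ratios of eigenvalues are roots of unity lying in $\mathcal{K}$ so that all eigenvalues generate the same cyclic Kummer extension $L$ of degree $d\mid m$, deduce $\lambda^d\in\mathcal{K}$ and $d\mid n$, and settle the prime case by forcing $d=n$. The only cosmetic differences are that you get ``$\sigma(\lambda_1)/\lambda_1$ has order exactly $d$'' from the injectivity of the Kummer character rather than the paper's norm computation, and you count $d\mid n$ via the factorization of the characteristic polynomial rather than via Galois orbits of eigenvalues; both variants are fine.
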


\begin{proof}
We know that there are a positive integer $m$ and a nonzero element $a \in \mathcal{K}$ such that $u^m=a \cdot \mathbf{1}_n$
where $\mathbf{1}_n$ is the identity square matrix of size $n$.
Clearly, the order of $\bar{u}$ is strictly greater than $1$ and divides $n$.

 Let $\alpha$ be an eigenvalue of $u$ that does not belong to $\mathcal{K}$. Then $\alpha^m=a$. Let us consider the finite algebraic  field extension  $\mathcal{K}^{\prime}=\mathcal{K}(\alpha)$  of $\mathcal{K}$ and denote by $d$ its degree $[\mathcal{K}^{\prime}:\mathcal{K}]$. Clearly, $d>1$. The Kummer theory tells us that $\mathcal{K}^{\prime}/\mathcal{K}$ is a a cyclic extension and $d \mid m$. In other words, $\mathcal{K}^{\prime}/\mathcal{K}$ is Galois and its Galois group $G$ is cyclic of order $d$.  If $\beta$ is another eigenvalue of $u$ then
$$\beta^m=a=\alpha^m$$
and therefore the ratio $\beta/\alpha$ is an $m$th root of unity and therefore lies in $\mathcal{K}$. This implies that 
$$\mathcal{K}(\beta)=\mathcal{K}(\alpha)=\mathcal{K}^{\prime};$$
in particular, none of eigenvalues of $u$ lies in $\mathcal{K}$.

Recall that the  cardinality of $G$  coincides with $d$. Since $\beta$ generates  $\mathcal{K}^{\prime}$ over $\mathcal{K}$, the set $\{\sigma(\beta)\mid \sigma \in G\}$ consists of $d$ distinct elements, each of which is an eigenvalue of $u$ and has the same multiplicity. Since the spectrum of $u$ is a disjoint union of  $G$-orbits, $d$ divides $n$.

Take   an element $\tau$   of (abelian  group) $G$.  Then $\tau(\beta)=\zeta \beta$ where $\zeta$ is a root of unity that lies in $\mathcal{K}$. The norms of conjugate  $\beta$ and $\tau(\beta)$ (with repect to $\mathcal{K}^{\prime}/\mathcal{K}$ ) do coincide. This means that
$$\prod_{\sigma \in G}\sigma(\beta)=\prod_{\sigma \in G}\sigma(\tau\beta)=\zeta^d \prod_{\sigma \in G}\sigma(\beta).$$
It follows that 
$$\zeta^d=1, \  \tau(\beta^d)=(\tau\beta)^d=\beta^d$$
for all $\tau\in G$. This implies that $\beta^d\in \mathcal{K}$ for all eigenvalues $\beta$ of $u$ and therefore all eigenvalues of $u^d$ lie in $\mathcal{K}$.

Now assume that $n$ is a prime. Then $d=n$ and counting arguments imply that the spectrum of $u$ consists of exactly one $G$-orbit say, $G\beta$. Then all the eigenvalues of $u^n=u^d$ coincide with 
$$\beta^n=\beta^d\in \mathcal{K}.$$
This implies that $u^n$ is a scalar and therefore the order of $\bar{u}$ divides $n$. One has only to recall that this order is greater than $1$ and $n$ is a prime.
\end{proof}

The next lemmas show that the case $m(Z)\le 2$ may be reduced to the case $m(Z)=0$.

To this end we  find  an open   $H_a-$invariant  subset  $\tilde B\subset A$  of  $A$   such that 
$\tilde W=p^{-1} (\tilde B)$ is a complement of exactly two   (respectively 1) "horizontal"  components . Namely, 
we build a rank two vector bundle $E$ over  $\tilde B$ such that 
$\tilde W$ appears to be  isomorphic  to the complement $Y\setminus D$ 
 of   two ( respectively, one) disjoint sections  in $Y:=P(E).$

More precisely, we are going to build the following chain of   maps and  inclusions  of smooth irreducible quasiprojective varieties
\begin{equation}\label{diagram-m<2}
\begin{aligned}
& X&\hookleftarrow \quad   & W&\hookleftarrow \quad     &    \tilde  W& \stackrel{\Psi}{\rightarrow }\quad & (Y\setminus{D})  &\hookrightarrow\quad  &Y&\hookrightarrow\quad& \tilde X&\notag \\
&\downarrow p&{}&\downarrow p&{}&\downarrow p&{}                                       &\quad \downarrow \pi&{} &\downarrow  \pi&{} &
\downarrow\tilde \pi&\notag \\
&A&=\quad    &A&\hookleftarrow\quad     &    \tilde B& =   \quad                                      & \quad    \tilde B&  = \quad   &   \tilde B& \hookrightarrow\quad  &A       &
 \end{aligned},  \end{equation}

such that:
 \begin{itemize}\item ${\tilde B}$ is an open dense subset of $A$   invariant under the $H_a$-action;
\item $\tilde  W=p^{-1}(\tilde B)\subset W$ is invariant under the action of $\Aut(W)$;
\item  $ \Psi$ is an isomorphism;
\item every fiber of $\pi: Y\to\tilde B$ is projective;
\item $D$ is a closed subset of $Y$ that meets every fiber of $ \pi$ at no more than two points;
\item $\tilde X$ is projective;
\item $\tilde  \pi(\tilde X\setminus Y)=A\setminus \tilde  B.$
\end{itemize}

According to \lemref{lem1},  $\Aut(Y\setminus{D})\subset \Aut(Y).$   
Thus, instead of  $\Aut(W)$ we may study  $\Aut(Y)$ where $Y$ is fibered over ${\tilde B}\subset A$ with projective fibers
 (hence  $m(\tilde X\setminus Y)=0$).

The building of this construction is  done in the following Lemmas. 

\begin{Lemma}\label{m1} If      $(X, \phi,Z )$
is an $A$-admissible triple, $W:=X\setminus Z$
  and 
$m(Z)=1$ then  there exists an $A$-admissible triple  $(\tilde X, \tilde \phi , \tilde Z)$
with $m(\tilde Z)=0$ and a group embedding $\Aut(W)\hookrightarrow \Aut (\tilde W),$ where  $\tilde W:=\tilde X\setminus \tilde Z.$\end{Lemma}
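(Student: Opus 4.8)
The plan is to exploit the fact that when $m(Z)=1$, the single horizontal component $Z$ of $X$ meets the general fiber $P_a\cong\BP^1$ in exactly one point, which is necessarily defined over $\mathcal K$ itself (since $r(Z)=m(Z)=1$, there is one Galois orbit of size one). So over the open set $B\subset A$ where $\phi$ is an isomorphism onto $B\times\BP^1$, the component $Z_B=Z\cap X_B$ is the graph of a rational section $s\colon A\dasharrow X$ of $p$. First I would choose a smooth projective closure and resolve so that $Z$ is actually a smooth subvariety isomorphic (via $p$) to an open dense $p(Z_B)\subseteq A$; more precisely, after shrinking $B$, the restriction $p|_{Z_B}\colon Z_B\to B$ is an isomorphism. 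Crucially, because $\Aut(W)=\Aut_p(W)$ and every $f\in\Aut(W)$ fixes the only horizontal component $Z\cap W$-complement data, the image of the indeterminacy/fixed locus is controlled: I want $\tilde B\subseteq B$ to be the largest open subset that is $H_a$-invariant and over which $Z$ is a genuine section; since $H_a\subset\Aut(A)$ and the ``bad locus'' of the section is a proper closed $H_a$-stable subset of $A$ (because $g_f$ carries $Z$ to $Z$), such a $\tilde B$ exists.

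Next I would perform the geometric construction recorded in the diagram \eqref{diagram-m<2} in the degenerate rank-one form: set $\tilde W=p^{-1}(\tilde B)\subset W$, which is $\Aut(W)$-invariant by the previous paragraph, and consider the blow-down/contraction that turns the $\BP^1$-bundle $X_{\tilde B}\to\tilde B$ together with its section $Z_{\tilde B}$ into the data needed. Concretely: $X_{\tilde B}\cong\BP(E)$ for a rank-two vector bundle $E$ over $\tilde B$ (after shrinking $\tilde B$ one may even take $E$ trivial, but equivariance forces us to keep $E$ abstract), and the section $Z_{\tilde B}$ corresponds to a line subbundle $L\hookrightarrow E$. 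Then $\tilde W = X_{\tilde B}\setminus Z_{\tilde B}$ is the total space of the line bundle $\mathrm{Hom}(L, E/L)$, i.e.\ an $\BA^1$-bundle over $\tilde B$. I would add a section at infinity in a \emph{different} way: compactify each $\BA^1$-fiber to $\BP^1$ by adding the zero section of the \emph{other} ruling, obtaining $Y'\to\tilde B$ a $\BP^1$-bundle in which $\tilde W$ sits as the complement of \emph{zero} horizontal components — but that is not literally possible since $\tilde W$ is not projective over $\tilde B$. Instead, the correct target is: realize $\tilde W$ as $X_{\tilde B}\setminus Z_{\tilde B}$, take $\tilde X$ a smooth projective closure of $X_{\tilde B}$ extending $p$ to $\tilde\pi\colon\tilde X\to A$ with $\tilde\pi^{-1}(\tilde B)=X_{\tilde B}$, and set $\tilde Z = \tilde X\setminus\tilde W = (\tilde X\setminus X_{\tilde B})\,\cup\,Z_{\tilde B}$. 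The point is that $Z_{\tilde B}$, being a section over $\tilde B$, is \emph{vertical}-complemented — no: I must instead observe $\tilde W\hookrightarrow Y := X_{\tilde B}$, where now $Y\to\tilde B$ has projective ($\BP^1$) fibers and $\tilde W = Y\setminus Z_{\tilde B}$ with $Z_{\tilde B}$ meeting each fiber in one point; so indeed $m(\tilde X\setminus Y)=0$ if we take $\tilde X\supset Y$ projective with $\tilde\pi(\tilde X\setminus Y)=A\setminus\tilde B$. Then $(\tilde X,\tilde\phi,\tilde Z)$ with $\tilde\phi$ the induced birational map to $A\times\BP^1$ and $\tilde Z=\tilde X\setminus Y$ is the desired $A$-admissible triple, and $m(\tilde Z)=0$.

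The embedding $\Aut(W)\hookrightarrow\Aut(\tilde W)$ I would obtain as follows: since $\tilde W\subset W$ is $\Aut(W)$-invariant and open dense, restriction gives an injective homomorphism $\Aut(W)\to\Aut(\tilde W)$ exactly as in Remark~\ref{smooth}. (Injectivity is immediate from density.) Finally, since $\tilde W = Y\setminus Z_{\tilde B}$ with $Z_{\tilde B}$ horizontal and finite over $\tilde B$, and $Y\to\tilde B$ has irreducible projective fibers, \lemref{lem1} applies to give $\Aut_\pi(\tilde W)=\Aut_\pi(Y)$; combined with the rigidity of $A$ (item \textbf{g}: every automorphism of $\tilde W$ is $\tilde\pi$-fiberwise, $\Aut(\tilde W)=\Aut_{\tilde\pi}(\tilde W)$) this is what is needed downstream, but for the present Lemma it suffices to have produced $(\tilde X,\tilde\phi,\tilde Z)$ and the embedding.

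The main obstacle I anticipate is the \textbf{equivariance of the construction}: producing $\tilde B$, the bundle $E$, and the section $Z_{\tilde B}$ not just birationally but in a way that every $f\in\Aut(W)$ genuinely extends to a biregular automorphism of the new model $Y$ (equivalently, of $\tilde W$). The subtle points are (1) that $\tilde B$ must be simultaneously open, dense, contained in the locus where $\phi$ is an isomorphism \emph{and} where $Z$ is a clean section, and $H_a$-invariant — this forces taking the interior of the intersection over all $g\in H_a$ of the good loci, and one must check this is still nonempty, which follows because the complement is a proper closed $H_a$-stable subset (using that $H_a$ preserves $Z$ and $\phi$'s structure); and (2) verifying that $\tilde W = p^{-1}(\tilde B)\cap W$ really equals $p^{-1}(\tilde B)$ inside $X$, i.e.\ that over $\tilde B$ the only part of $Z$ is the horizontal section $Z_{\tilde B}$ and nothing vertical — which holds by construction since $\tilde B$ avoids $p(Z^{\mathrm{vert}})$. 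Once $\tilde B$ is fixed with these properties, the passage from the $\BP^1$-bundle-with-section to the admissible triple with $m(\tilde Z)=0$ is formal.
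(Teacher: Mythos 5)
Your overall strategy --- pass to a dense open $H_a$-invariant subset $\tilde B\subset A$ over which the horizontal part of $Z$ becomes a section of a $\BP^1$-bundle, fill that section back in via \lemref{lem1}, and take a projective closure whose boundary is purely vertical --- is the paper's, and your closing steps (injectivity of restriction by density, \lemref{lem1}, rigidity of $A$ to extend the projection to the closure) are fine. The gap is in the construction of $\tilde B$. You want $\tilde B$ to lie inside the good locus $B$ (where $\phi$ trivializes $X$ over $A$ and $Z$ is a genuine section of $p:X\to A$) \emph{and} to be $H_a$-invariant, and you justify its existence by asserting that the bad locus is a proper closed $H_a$-stable subset of $A$. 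That assertion does not hold. First, $H_a$ acts on $A$ and, via chosen lifts $f_g\in\Aut(W)$, on $W$ --- but not on $X$: an automorphism of $W$ extends only to a birational self-map of $X$, so there is no reason that the locus where the fiber of $X$ is not $\BP^1$, or where $Z\to A$ fails to be a section, or where $\phi$ is undefined, is carried to itself by $g\in H_a$. Second, the only way to force invariance from the inside is to pass to $\bigcap_{g\in H_a}g(B)$, whose complement $\bigcup_{g}g(A\setminus B)$ is a union of possibly infinitely many closed sets; it need be neither closed nor proper. (Take $A$ an abelian surface, $H_a$ an infinite group of translations and $A\setminus B$ a single point: the translates can cover $A$, so the intersection is empty.) Hence the ``interior of the intersection of the good loci'' on which your whole construction rests may not exist.

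The paper resolves this by going the other way: it sets $\tilde B=\bigcup_{g\in H_a}g(B)$, which is automatically open, dense and $H_a$-invariant, and then does \emph{not} use $X_{\tilde B}$ at all. Over $g(B)$ the only usable structure lives on $W$: transporting the affine coordinate $\tau=\phi^{*}(w_1/w_0)$ by $f_g^{-1}$ yields trivializations $\psi_g$ of $W\cap p^{-1}(g(B))$ over $g(B)\times\BA^1$ whose transition maps are affine, $\tau_g=\alpha_{gh}\tau_h+\beta_{gh}$. These cocycles define a rank-two bundle $E$ on $\tilde B$ and exhibit $W\cap p^{-1}(\tilde B)$ as $\BP(E)\setminus D$ for a section $D$; it is to the new model $Y=\BP(E)$ --- in general not isomorphic to $X_{\tilde B}$, whose fibers over $\tilde B\setminus B$ may be reducible and may meet $Z$ in vertical or non-finite components, so that \lemref{lem1} would not apply to it --- that \lemref{lem1} is applied. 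If you want to avoid the explicit cocycle, Remark~\ref{Kam} points to the results of Kambayashi--Wright and Sumihiro for the same conclusion; but some construction of a $\BP^1$-bundle with section over all of $\bigcup_g g(B)$ is indispensable, and your proposal does not supply one.
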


\begin{proof}

 Let $(w_0: w_1)$ be homogeneous coordinates in $\BP^1.$  We may choose them in such a way that  $\phi (Z_B)=B\times\{(0:1)\}.$

Let
\begin{itemize}\item
 $W_B=W\cap  X_B=X_B\setminus Z_B;$ 
\item  
$B_g=g(B)$   for an automorphism $g\in H_a;$ 
\item  $ W_g=W\cap  p^{-1}(B_g);$
\item $\tilde B=\cup B_g, \ g\in H_a;$
\item  $\tilde W= p^{-1}(\tilde B)=\cup W_g, \ g\in H_a.$
\end{itemize}
We have $\phi(W_B)=B\times (\BP^1\setminus\{w_0=0\}). $
Thus, the rational function 
$t=w_1/w_0$
  is defined on $\phi(W_B)$ and the rational function 
$\tau=\phi^*(t)$ is defined on $W_B.$  It establishes   an isomorphism of the fiber 
$p^{-1}(b)\cap W$ with $\mathbb{A}^1_t$   if $b$ is a point of $B$.

For every ${g\in H_a}$ there exists $f_g\in \Aut(W)$ such that $g=g_{f_g}$  and $f_g(W_B)=W_g.$
We define $\tau_g  = \tau\circ  f_g^{-1}$, which is a regular function on $W_g$.
  (Note that  apriori the choice  of $f_g$  is not unique.  A different choice  
   of $f_g$ will  change $\tau_g$ by a $p-$fiberwise automorphism  of  $W_B$, which becomes
   a nondegenerate affine  transformation of the generic fiber  $\mathcal W_p\sim\mathbb{A}^1_{\mathcal{K}}$.)

We introduce the isomorphisms $\psi_g:W_g\to B_g\times \mathbb{A}^1$ by $\psi_g(w)=(p(w),\tau_g(w)).
$  Actually, 
$\psi_g$  are compositions of the chain of automorphisms
$$W_g              \stackrel{f_g^{-1}}{\rightarrow }   W_B                \stackrel{\phi}{\rightarrow } \quad B\times \mathbb{A}^1 _t  
\stackrel{(g^{-1},id)}{\rightarrow } B_g\times \mathbb{A}^1_{t}.$$

Note that in this chain $f_g^{-1}$ is $p-$fiberwise,   $\phi$  is $p,p_A-$fiberwise,  and $(g^{-1},\mathrm{id})$ is $p_A-$fiberwise,  thus 
$\psi_g$  is $p,p_A-$fiberwise.   It  may be included into the following commutative diagram

\begin{equation}\label{psig}
\begin{CD}
W_g   @>>> B_g\times \mathbb{A}^1_{t} \\
@Vp VV @Vp_A VV \\
B_g @>{\mathrm{id}}>>  B_g
\end{CD}.
\end{equation}

  If $g,h\in H_a$  and $w\in  W_g\cap W_h$ then:
\begin{itemize}\item $b=p(w) \in(B_g\cap  B_h);$ 
\item    functions $\tau_g$ and $\tau _h$ provide an isomorphism  of the  fiber $P_b=p^{-1}(b)\cap W$ 
with $\mathbb{A}^1_{t} $  hence 
$$\tau_g=\tau\circ  f_g^{-1}= \tau\circ f_h^{-1}\circ  f_h\circ f_g^{-1}=\tau_h\circ  f_h\circ f_g^{-1}=\tau_h\alpha+\beta, $$
  where $\alpha:=\alpha_{gh}(b), \beta:=\beta_{gh} (b)$ are regular in $B_g\cap  B_h,$ constant along $P_b,$
 and $\alpha_{gh}$  does {\sl not} vanish in $(B_g\cap  B_h)$;
\item $\Psi_{gh}=\psi_g(w)\circ\psi_h^{-1}$ is a $p_A-$fiberwise automorphism of $(B_g\cap B_h)\times \mathbb{A}^1$
defined by $\Psi_{gh}(b,\tau_h)=(b,\tau_g)=(b, \alpha _{gh}(b)\tau_h+\beta_{gh}(b));$
\end{itemize}

  It follows that $\tilde W$ is the total body of an $\mathbb{A}^1$-bundle on $\tilde B$: the latter is  defined by
 transition functions  $\Psi_{gh}$.

We define a rank two vector bundle by the following data.

\begin{itemize}\item the covering of  $\tilde B $ by the open subsets $B_g, g \in H_a;$
\item natural projection $ \pi_E:B_g\times \BA^2_{(u_0,u_1)}\to B_g;$
\item  transition matrices on 
$B_g\cap  B_h$
$$M_{gh}=\begin{pmatrix}1&0\\\beta_{gh}&\alpha_{gh}\end{pmatrix}.$$
\end{itemize}
The    maps  
$$\{\ov \psi_g(w):  W_g\to \mathbb{P}(E),  \ \ov \psi_g(w)=(p(w), (1:\tau_g(w))$$
 glue together to an isomorphism
  $$\Psi:{\tilde W} \cong \mathbb{P}(E)\setminus\{u_0 =0\}.$$  
  We denote by $D$  the divisor (image of the section) $\{u_0=0\}$ in  $\mathbb{P}(E)$  and by $ \pi$ the induced 
by $\pi_E$  the projection map $P(E)\to\tilde B.$


We have $\Aut(W)\subset \Aut({\tilde  W})$, since the (sub)set ${\tilde B} \subset  A$ is invariant under the action of $H_a.$ 
 On the other hand, according to \lemref{lem1}, $\Aut({\tilde W)}\subset \Aut(Y)$ where  $Y:=\mathbb{P}(E).$  
 Take any smooth projective closure  $\tilde X$ of $Y$ and   extend  $\pi$ to    the  rational  map $\tilde\pi:\tilde X\to A. $ 
Since $A$ contains no rational curves,  $\tilde \pi $ is a morphism, which is obviously projective.   Let $\tilde D$ be the closure of $ D$  in $\tilde X.$ Note that
  $\tilde D\cap Y=D,$
$\tilde\pi^{-1}({\tilde B} )=Y,$  and $\tilde p^{-1}(A\setminus{ \tilde B })=\tilde X\setminus Y$,
in light of the ``maximality" property of projective (and therefore proper) morphism $\pi $
 \cite[Section 2.6, pp. 95--96] {MuOd}).

Let $\tilde Z=\tilde X \setminus Y$. Let $\tilde \phi: \tilde{X}\dasharrow A \times \mathbb{P}^1$ be
 the rational extension of $\phi\circ  {\Psi}^{-1}: Y \dasharrow  A \times \mathbb{P}^1.$ 
Then $m(\tilde Z)=0,$ and the $A$-admissible triple  $(\tilde X, \tilde \phi , \tilde Z)$
 is the one we were looking for. \end{proof}

\begin{remark}\label{Kam}  This Lemma may be derived from  general results in \cite{KW} and \cite{Su1}, \cite{Su2}
 but  we prefer  an explicit construction which we use  in the next Lemma.\end{remark}

\begin{Lemma}\label{m2} Assume that  a triple  $(X, \phi,Z )$ is $A$-admissible,
 $m(Z)= 2$ and $r(Z)=2.$
 Then  there exists an $A$-admissible triple  $(\tilde X, \tilde \phi , \tilde Z)$
with $m(\tilde Z)=0$ and a group embedding $\Aut(W)\hookrightarrow \Aut (\tilde W),$
 where  $\tilde W:=\tilde X\setminus \tilde Z.$\end{Lemma}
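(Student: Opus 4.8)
The plan is to mimic the construction in the proof of \lemref{m1}, replacing the affine line by $\mathbb G_m:=\BP^1\setminus\{0,\infty\}$ and the affine group by $\Aut(\mathbb G_m)=\mathbb G_m\rtimes\langle z\mapsto 1/z\rangle$. Since $m(Z)=r(Z)=2$, the punctures of the generic fibre $\mathcal W_p\cong\BP^1_{\mathcal K}\setminus M$ form two one-point Galois orbits, so $Z$ has exactly two horizontal components $Z_1,Z_2$, each a section of $p$ over a suitable dense open $B\subset A$ (cf.\ properties (e), (f)). Shrinking $B$ and replacing $\phi$ by its composition with a $p$-fiberwise birational automorphism of $A\times\BP^1$ (which changes neither $\Aut(W)$ nor $\Aut_p(W)$), I may assume $\phi(Z\cap X_B)=B\times\{(0:1),(1:0)\}$. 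Put $t=w_1/w_0$ and $\tau=\phi^{*}(t)$; then $\tau$ is a nowhere-vanishing regular function on $W_B:=X_B\setminus(Z\cap X_B)$, and $(p,\tau)$ is an isomorphism $W_B\xrightarrow{\sim}B\times\mathbb G_m$ which restricts on each fibre over $b\in B$ to an isomorphism $p^{-1}(b)\cap W\xrightarrow{\sim}\mathbb G_m$.

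Exactly as in \lemref{m1}, for every $g\in H_a$ I choose $f_g\in\Aut(W)$ with $g_{f_g}=g$ and set $B_g=g(B)$, $W_g=p^{-1}(B_g)\cap W$, $\tilde B=\bigcup_{g\in H_a}B_g$ (an $H_a$-invariant dense open subset of $A$), $\tilde W=p^{-1}(\tilde B)\cap W$ (which is $\Aut(W)$-invariant since $\tilde B$ is $H_a$-invariant), and $\tau_g=\tau\circ f_g^{-1}\colon W_g\to\mathbb G_m$. Then $\psi_g:=(p,\tau_g)\colon W_g\xrightarrow{\sim}B_g\times\mathbb G_m$ is a $p,p_A$-fiberwise isomorphism, and on the overlap $(B_g\cap B_h)\times\mathbb G_m$ (note $B_g\cap B_h$ is irreducible, hence connected) the transition $\Psi_{gh}=\psi_g\circ\psi_h^{-1}$ is an automorphism over $B_g\cap B_h$ of the form $(b,z)\mapsto(b,\lambda_{gh}(b)\,z^{\varepsilon_{gh}})$ with $\varepsilon_{gh}\in\{\pm 1\}$ constant and $\lambda_{gh}\in\mathcal O^{*}(B_g\cap B_h)$. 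As in \lemref{m1}, a different choice of $f_g$ only composes $\tau_g$ with a $p$-fiberwise automorphism of $W_B$, i.e.\ with an element of $\Aut(\mathbb G_m)$ over $\mathcal K$, which is harmless. The family $\{\Psi_{gh}\}$ is a $1$-cocycle, so $\tilde W$ is the total space of a locally trivial $\Aut(\mathbb G_m)$-fibration over $\tilde B$; the feature absent from \lemref{m1} is the possibility of \emph{swap} transitions $z\mapsto\lambda/z$.

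To compactify fiberwise, I extend each $\Psi_{gh}$ to the automorphism $\overline{\Psi}_{gh}$ of $(B_g\cap B_h)\times\BP^1$ that is monomial in $\PGL(2)$ on each fibre; since an automorphism of $\BP^1$ is determined by its restriction to $\mathbb G_m$, the family $\{\overline{\Psi}_{gh}\}$ is again a $1$-cocycle. Gluing the charts $\{B_g\times\BP^1\}_{g\in H_a}$ along it produces a smooth irreducible variety $Y$ with a morphism $\pi\colon Y\to\tilde B$ all of whose fibres are $\BP^1$; as $\omega_{Y/\tilde B}^{-1}$ is $\pi$-ample, $\pi$ is projective and $Y$ is quasiprojective. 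The isomorphisms $\overline{\psi}_g\colon w\mapsto(p(w),(1:\tau_g(w)))$ glue to an isomorphism $\Psi\colon\tilde W\xrightarrow{\sim}Y\setminus D$, where $D\subset Y$ is the image of $\bigsqcup_{g}B_g\times\{(1:0),(0:1)\}$ --- a closed subset meeting every fibre of $\pi$ in exactly two points (two disjoint sections if no $f\in\Aut(W)$ interchanges $Z_1$ and $Z_2$, a connected double cover of $\tilde B$ otherwise). One may moreover realize $Y$ as $\BP(E)$ for a rank-two vector bundle $E$ on $\tilde B$: the monomial $\PGL(2)$-valued transitions lift to $\GL(2)$, the sole obstruction being a class in $H^{2}_{\mathrm{Zar}}(\tilde B,\mathcal O^{*})$, which vanishes because $\tilde B$ is smooth.

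The endgame repeats that of \lemref{m1}. Restriction gives an embedding $\Aut(W)\hookrightarrow\Aut(\tilde W)\cong\Aut(Y\setminus D)$ (using that $\tilde W$ is $\Aut(W)$-invariant and dense in $W$); every such automorphism is $\pi$-fiberwise and $D$ is ``$\pi$-horizontal'', so \lemref{lem1} gives $\Aut(Y\setminus D)\subset\Aut(Y)$. I then choose a smooth projective closure $\tilde X\supset Y$; since $A$ contains no rational curves, the rational extension $\tilde\pi\colon\tilde X\dasharrow A$ of $\pi$ is a projective morphism, and by the maximality property of the projective morphism $\pi$ (cf.\ \cite[Section 2.6]{MuOd}) one has $\tilde\pi^{-1}(\tilde B)=Y$; hence $\tilde Z:=\tilde X\setminus Y=\tilde\pi^{-1}(A\setminus\tilde B)$ has no horizontal component, i.e.\ $m(\tilde Z)=0$. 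Taking $\tilde\phi\colon\tilde X\dasharrow A\times\BP^1$ to be the rational extension of $\phi\circ\Psi^{-1}$, the triple $(\tilde X,\tilde\phi,\tilde Z)$ is $A$-admissible (it is birational to $A\times\BP^1$ because $\tilde W$, hence $Y$, is), $\tilde W=\tilde X\setminus\tilde Z=Y$, and we obtain the required embedding $\Aut(W)\hookrightarrow\Aut(\tilde W)$. I expect the main obstacle to lie in the middle step, building $Y$: controlling the \emph{swap} transitions $z\mapsto\lambda/z$, which have no analogue in \lemref{m1}, and the resulting scalar ambiguity when passing from $\PGL(2)$ to $\GL(2)$; the remaining steps are a routine adaptation of \lemref{m1}.
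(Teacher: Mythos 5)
Your argument is correct, and it diverges from the paper's own proof at exactly one (important) point. The paper's proof of this lemma is a three-line reduction to \lemref{m1}: after arranging $\phi(Z_B)=B\times\{w_0w_1=0\}$ it asserts that ``$\tau_g=0$ whenever $t=0$'', i.e.\ that every transition fixes each of the two punctures separately, so that $\beta_{gh}\equiv 0$, the transition matrices are diagonal, and $Y=\BP(\mathcal{O}\oplus L)$ with $D$ a union of two disjoint sections. That assertion is not justified there: an automorphism of $W$ may interchange the two horizontal components of $Z$ (fibrewise inversion on the $\mathbb{G}_m$-torsor attached to a $2$-torsion line bundle on an abelian variety is an example), and then some transitions are of the form $z\mapsto\lambda/z$, which is not affine. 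You confront this head-on: you allow monomial transitions $z\mapsto\lambda_{gh}(b)\,z^{\pm1}$, extend them to $\BP^1$, glue a Zariski-locally trivial $\BP^1$-bundle $Y\to\tilde B$ directly, and observe that \lemref{lem1} only needs $D$ to be closed and finite on each fibre --- it is irrelevant whether $D$ is two sections or an irreducible bisection. The price of abandoning the explicit rank-two transition matrices of \lemref{m1} is that you must check that the glued $Y$ is a separated quasiprojective variety; separatedness holds because each $\overline{\Psi}_{gh}$ is an isomorphism over $B_g\cap B_h$ whose graph is closed in the product of the two charts, and your appeal to the $\pi$-ampleness of $\omega_{Y/\tilde B}^{-1}$ then gives projectivity of $\pi$, hence quasiprojectivity of $Y$. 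The optional identification $Y\cong\BP(E)$ (via the vanishing of $H^2_{\mathrm{Zar}}(\tilde B,\mathcal{O}^{*})$, which does hold for a smooth irreducible variety by the flasque divisor resolution of $\mathcal{O}^{*}$, after refining the cover) is correct but unnecessary for the endgame. In short, your proof is sound, reaches the conclusion by the same overall strategy (fibrewise compactification, \lemref{lem1}, projective closure), and is more careful than the paper's reduction precisely where care is needed.
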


\begin{proof} 
Since $Z_B$ contains two
 disjoint irreducible over $\mathcal K$ horizontal components  we may choose homogeneous coordinates  $(w_0: w_1)$ 
 in $\BP^1$   in such a way that  $\phi (Z_B)=B\times\{w_0 w_1=0\}.$  Thus this is the special case of \lemref{m1} 
when (in the notation of \lemref{m1})  $\tau_g=0$ whenever $t=0$
for all $g\in H_a.$
  Thus this lemma follows from \lemref{m1}.
 Note that in this case $\beta_{gh}\equiv  0$ and instead  of the $\BA^1$-bundle we have a line bundle.\end{proof}

It follows that the case $m(Z)\le 2$ may be reduced to the case $m(Z)=0.$

\begin{Lemma}\label{m00} If  a triple $(X, \phi,Z )$
is $A$-admissible,  $W:=X\setminus Z$
 and 
$m(Z)=0$ then there exists an $A$-admissible  triple $(\tilde X, \tilde\phi,\tilde Z )$

such that:

1) There is a group embedding   $\Aut (W)\hookrightarrow \Aut(\tilde W)$ where  $\tilde W=\tilde X\setminus \tilde Z;$

2)  If  $\tilde p $  is the projection map from $\tilde X$  onto $A$ induced by $\tilde \phi$, then $ \tilde Z=\tilde p^{-1}(S)$ for a certain closed subset $S$ of $A$; in addition,   for every point $b\in B:=A\setminus S$
the fiber $P_b=\tilde p^{-1}(b)$ is an irreducible reduced curve isomorphic to $\BP^1.$
\end{Lemma}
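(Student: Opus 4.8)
The plan is to show that, in contrast to Lemmas \lemref{m1} and \lemref{m2}, no birational modification of $X$ is needed at all: one can take $(\tilde X,\tilde\phi,\tilde Z)=(X,\phi,p^{-1}(S))$ for a suitable closed $S\subsetneq A$, because $\Aut(W)$-equivariance will force every fibre of $p$ over a suitable $H_a$-invariant open subset of $A$ to be a clean copy of $\BP^1$ already disjoint from $Z$. To begin, put $T:=p(Z)$; as $X$ is projective, the morphism $p:X\to A$ is proper, so $T$ is closed in $A$, and $m(Z)=0$ forces $T\neq A$, whence $Z\subseteq p^{-1}(T)$ with $T\subsetneq A$. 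Since the generic fibre of $p$ is $\BP^1_{\mathcal{K}}$ (property (e)) and the general fibre is $\cong\BP^1$ (property (c)), openness of the smooth locus of $p$ combined with properness of $p$, constancy of the genus in a smooth proper family of curves, and connectedness of all fibres of $p$ (Stein factorization, using that $A$ is normal and the generic fibre is geometrically connected) yields a dense open $U_1\subseteq A$ over which $p$ is smooth and proper with every fibre isomorphic to $\BP^1$. Put $U_0:=U_1\setminus T$: this is still dense open, and since $Z\subseteq p^{-1}(T)$ we get $p^{-1}(U_0)\cap Z=\emptyset$ and $p^{-1}(b)\cong\BP^1$ for all $b\in U_0$.

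Now set
\[
\tilde B:=\bigcup_{g\in H_a}g(U_0)\subseteq A,
\]
an open dense subset (it contains $U_0$) which is $H_a$-invariant by construction. The crucial step is the claim that $p^{-1}(b)\cong\BP^1$ and $p^{-1}(b)\cap Z=\emptyset$ for every $b\in\tilde B$. To prove it, write $b=g(b')$ with $g\in H_a$ and $b'\in U_0$, and pick $f\in\Aut(W)$ with $g_f=g$ (possible by the definition of $H_a$); since $\Aut(W)=\Aut_p(W)$ (property (g)), $f$ restricts to an isomorphism $p^{-1}(b')\cap W\xrightarrow{\ \sim\ }p^{-1}(b)\cap W$. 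As $b'\notin T$, the source equals the full fibre $p^{-1}(b')\cong\BP^1$, so $p^{-1}(b)\cap W=p^{-1}(b)\setminus Z$ is isomorphic to $\BP^1$; being complete, this open subscheme of $p^{-1}(b)$ is also closed in it, and since $p^{-1}(b)$ is connected and $p^{-1}(b)\setminus Z$ is nonempty we conclude $p^{-1}(b)\setminus Z=p^{-1}(b)$. This proves the claim, and in particular $p^{-1}(\tilde B)\cap Z=\emptyset$, so $p^{-1}(\tilde B)\subseteq W$.

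Finally I would assemble the triple. Put $\tilde X:=X$, $\tilde\phi:=\phi$, $S:=A\setminus\tilde B$ (a proper closed subset of $A$, since $\tilde B\supseteq U_0\neq\emptyset$), $\tilde Z:=p^{-1}(S)$ and $\tilde W:=\tilde X\setminus\tilde Z=p^{-1}(\tilde B)$. Then $(\tilde X,\tilde\phi,\tilde Z)$ is $A$-admissible with induced projection $\tilde p=p$; one has $\tilde Z=\tilde p^{-1}(S)$, and by the claim every fibre $\tilde p^{-1}(b)$ with $b\in B:=A\setminus S=\tilde B$ is an irreducible reduced curve isomorphic to $\BP^1$, which is part 2). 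For part 1): $\tilde W=p^{-1}(\tilde B)$ is dense open in $W$, and it is $\Aut(W)$-invariant because $f(p^{-1}(\tilde B))=p^{-1}(g_f(\tilde B))=p^{-1}(\tilde B)$ for every $f\in\Aut(W)$, using $g_f\in H_a$ and the $H_a$-invariance of $\tilde B$; hence restriction defines a homomorphism $\Aut(W)\to\Aut(\tilde W)$, which is injective since an automorphism of $W$ restricting to the identity on the dense open $\tilde W$ is the identity.

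The only subtle point — the ``main obstacle'' — is recognizing the mechanism behind the crucial step: $\Aut(W)$-equivariance transports the good fibres over $U_0$ to good fibres over the whole orbit $\tilde B$, and then connectedness of the fibres of $p$ together with completeness of $\BP^1$ upgrades ``the fibre contains a copy of $\BP^1$ disjoint from $Z$'' to ``the fibre is that copy of $\BP^1$''. Once this is seen, the rest reduces to standard facts (properness of $p$, generic smoothness, constancy of the genus, Stein factorization over the normal base $A$), and, unlike in Lemmas \lemref{m1} and \lemref{m2}, neither \lemref{lem1} nor any blow-up is needed.
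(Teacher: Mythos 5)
Your proof is correct and follows essentially the same route as the paper: keep $\tilde X=X$, $\tilde\phi=\phi$, and enlarge $Z$ to $\tilde Z=p^{-1}(S)$ for an $H_a$-invariant closed set $S\subsetneq A$ containing $p(Z)$ and the bad-fibre locus. The only cosmetic difference is that the paper defines $S$ intrinsically as the set of points with singular or non-$\BP^1$ fibres, whereas you take the complement of the $H_a$-orbit of a good open set, and you spell out (via completeness of $\BP^1$ plus connectedness of the fibres of $p$) the step, left implicit in the paper, that a fibre meeting $W$ in a copy of $\BP^1$ must lie entirely in $W$.
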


\begin{remark} In loose words it means that we can add to $Z$ all the singular fibers of $p$ 
without reducing an automorphism group. \end{remark}

\begin{proof} Since $m(Z)=0, $ we have $Z_A=p(Z) \ne A$ is a closed subset of $A.$  Let $a$  be a point of $Z_A$.    Then the fiber 
$P_a\cap W= P_a\setminus (Z\cap P_a)$ either has a non-projective irreducible component or is empty. 
Let $S_s$ be  the set of all points $a\in A\setminus Z_A$ such that   $P_a= p^{-1}(a)$
is singular (namely, has several irreducible components or a non-reduced  component).
Let $S:=S_s\cup  Z_A\subset A,$ i.e,  $ B:=A\setminus S$ is the set of all points $a\in A $ such that the fiber $P_a\subset W $  
 is a reduced irreducible smooth curve isomorphic to $\BP^1.$
Then   sets $B$ and $S$  are  invariant under the  action of  $H_a$  (see \eqref {seq1}), thus $\tilde W:= p^{-1}( B)$ is invariant under the action of  $\Aut(W),$ i.e, $\Aut(W)\subset \Aut(\tilde W).$

Thus, the  $A$-admissible triple  
$$\tilde X:=X, \tilde\phi:=\phi, \tilde Z:=p^{-1}(S)$$
enjoys the desired properties.
\end{proof}

\section{Proof of \thmref{main}}
\label{s3}

In this section we prove \thmref{main}  and \corref{main3d}.

\begin{proof}[Proof of Theorem \ref{main}]
By Remark \ref{smooth} we may assume that $W$ is smooth. 
When $W$ is projective, the desired result follows from \cite{MengZhang}. So, we may assume
 that quasiprojective $W$ is {\sl not} projective.  By Remark \ref{redtriple} we may choose such an     $A-$admissible 
$(X, \phi,Z )$  that $W=X\setminus Z.$
We use the exact sequence  \eqref{seq1}.

 It is proven in \cite [Section 6]{PS1} (see also \cite [Corollary 3.8]{BZ16}) that for an irreducible non-uniruled  variety $A$ the group 
$\Bir(A)$ (and, hence, $\Aut(A)$) is
 {\sl strongly} Jordan. 

  If $m(Z)>2$, then the (sub)group  $H_i$ in the short
   exact sequence \eqref{seq1}  is finite. 
If $m(Z)=2$, and $r(Z)=1,$  then, according to \lemref{field}, $H_i$ is bounded. 
It follows from Lemma \ref{sides}
 that in both cases $\Aut(W)$ is Jordan. (See also \cite[Lemma 2.8]{PS1}.)

According to \lemref{m2}, \lemref{m1}, \lemref{m00},  in all other cases  one  may  assume that conditions of \propref{m0} 
are satisfied, hence,  $\Aut(W)$ is strongly Jordan.


\end{proof}

\begin{proof}[Proof of \corref{main3d}]

Assume that $W$ is a quasiprojective irreducible variety of dimension $d\le 3.$ The case  $\dim W\le2$ was done in \cite{Popov1, ZarhinTG15,BZ15}. 
Assume that $ W$  is not  birational to
 $E\times \BP^2, $  where $E$ is an elliptic curve.  

If  $\dim(W) =3$  and $\Bir(W) $ is Jordan, then 
its subgroup $\Aut(W) $ is also Jordan.
 If $\Bir(W) $ is {\sl not} Jordan, then 
 according to \cite{PS2}, the variety $W$ has to be birational to $S\times \BP^1,$ where  $S$ is a surface
that enjoys one of the following  three properties.

{\bf Case  1.}     $S$  is an abelian surface. Since $S$ contains no rational curves, it  is rigid. Thus,$\Aut(W) $ is Jordan by \thmref{main}.

{\bf  Case  2.}   $S$ is bielliptic surface. Since $S$ contains no rational curves,  it is rigid. 
Thus,$\Aut(W) $ is Jordan by \thmref{main}.

 {\bf    Case  3.}  $S$ is a surface with Kodaira dimension  $ \varkappa(S)=1$ such that the 
Jacobian fibration of the pluricanonical fibration is locally trivial in Zariski topology.

Consider  {\bf    Case  3.}
Further on
 we   assume that $k=\BC$. 

  We have to prove that $S$ is rigid. Since
Jacobian fibration of the pluricanonical fibration is locally trivial in Zariski topology,   all fibers  (even the multiple ones)
of  the  pluricanonical fibration are smooth elliptic curves (\cite[Chapter VII, section 7,   Corollary  2]{Shaf}, \cite[Theorem 5.3.1]{CoDo}).

\begin{Lemma}\label{elhyp} Assume that $A$ is  a  smooth irreducible surface endowed with a 
morphism $\pi:A\to C$ such that 
\begin{itemize}\item $C$ is a smooth curve  of genus $g; $
\item Every fiber $F_c=\pi^{-1}(c), 
 c\in C$ is a smooth  elliptic curve;
\item Kodaira dimension $\varkappa (A)=1.$
\item  Morphism $\pi$ is a pluricanonical fibration, i.e  for some $N$ and every effective divisor $D\in|NK_A|$  there are positive numbers $\nu_1,\dots,\nu_n$ and fibers $F_1,\dots,F_n$  of $\pi$ such that $D=\sum\limits_{1}^{n}\nu_iF_i.$
\end{itemize}
 Then surface $A$   
contains no rational curves.\end{Lemma}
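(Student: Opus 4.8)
The plan is to argue by contradiction: starting from an irreducible rational curve $R\subset A$, I will construct a uniruled surface admitting a dominant generically finite rational map onto $A$, which is absurd since $\varkappa(A)=1\ge 0$.

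First I would reduce to $C\cong\BP^1$. If $\pi$ contracted $R$ to a point, $R$ would lie in a fibre, hence $R=\pi^{-1}(c)_{\mathrm{red}}$ for some $c$; but this is a smooth elliptic curve, so it is not rational — a contradiction. Thus $\pi|_R\colon R\to C$ is surjective, so $C$ is dominated by the rational curve $R$, forcing $g=0$. (If $g\ge 1$ this already proves the lemma, since no fibre of $\pi$ can contain a rational curve.) I would also note that $\pi$ is relatively minimal, because a $(-1)$-curve, being rational, cannot lie in a smooth elliptic fibre; hence the only special fibres of $\pi\colon A\to\BP^1$ are multiple fibres $m_iF_i$ over finitely many points $P_i\in\BP^1$, each $F_i$ a smooth elliptic curve.

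The heart of the argument is a base change along $R$ that un-winds these multiple fibres. Let $\nu\colon\tilde R=\BP^1\to R\hookrightarrow A$ be the normalization and $h=\pi\circ\nu\colon\BP^1\to\BP^1$, of degree $n=R\cdot F\ge 1$ where $F$ is the class of a fibre. The crucial local fact is that along any branch of $R$ through a point of $F_i$, a local equation $s$ of $F_i$ and a local coordinate $t$ at $P_i$ satisfy $\pi^{*}t=(\mathrm{unit})\cdot s^{m_i}$; consequently the ramification index of $h$ at the corresponding point of $\tilde R$ is $m_i$ times the (integral) intersection number of that branch with $F_i$, and in particular is divisible by $m_i$. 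I would then form $A\times_C\tilde R$ — an integral surface, since the generic fibre of $\pi$ is a geometrically integral genus-one curve — resolve its singularities, and pass to a relatively minimal elliptic model $\pi'\colon A'\to\tilde R=\BP^1$. Since $\mathrm{char}\,k=0$ and $h$ ramifies over each $P_i$ to an order divisible by $m_i$, the classical theory of elliptic surfaces tells us that each multiple fibre is replaced by a reduced smooth elliptic fibre and that no new degenerate fibre appears; so \emph{every} fibre of $\pi'$ is a smooth elliptic curve. The diagonal-type section $r\mapsto(\nu(r),r)$ of $A\times_C\tilde R\to\tilde R$ induces a section of $\pi'$ (a rational section over a smooth curve is regular), while the first projection induces a dominant, generically finite rational map $g\colon A'\dashrightarrow A$.

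To finish: since $\pi'\colon A'\to\BP^1$ is a relatively minimal elliptic surface with only smooth elliptic fibres, $e(A')=0$ and $K_{A'}^2=0$, so Noether's formula gives $\chi(\mathcal{O}_{A'})=0$; Kodaira's canonical bundle formula — which has no multiple-fibre correction, because $\pi'$ has a section — gives $K_{A'}=(\pi')^{*}\mathcal{L}$ with $\deg\mathcal{L}=(2\cdot 0-2)+\chi(\mathcal{O}_{A'})=-2<0$. Hence $\varkappa(A')=-\infty$ and $A'$ is uniruled, so $A$ (being dominated by $A'$ through the generically finite map $g$) is uniruled as well, contradicting $\varkappa(A)=1$. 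The step I expect to be the main obstacle is the base change: proving that $h$ ramifies over each $P_i$ to an order divisible by $m_i$ (via integrality of local intersection numbers with the reduced multiple fibre) and invoking the characteristic-zero theory of multiple fibres under ramified base change, so as to be sure that $A'$ is genuinely an elliptic surface all of whose fibres are smooth elliptic curves. The reduction to $C\cong\BP^1$, relative minimality, and the numerics on $A'$ are routine.
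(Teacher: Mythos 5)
Your proposal is correct, but it takes a genuinely different route from the paper's. The two arguments share the opening reduction (an irreducible rational curve $R$ cannot lie in a fibre, since every fibre has smooth elliptic reduction, so $R$ dominates $C$ and forces $g=0$) and the same key local fact: because $\pi^{*}t$ vanishes to order $m_i$ along the reduced multiple fibre $F_i$, the ramification index of $\pi|_R$ at any point over $P_i$ is divisible by $m_i$ (the paper only needs ``at least $m_i$''). From there the paper finishes in a few lines: it applies the Hurwitz formula directly to $\tilde R=\BP^1\to C=\BP^1$, uses $r_{i,j}\ge m_i$ to bound the number of preimages of each $P_i$ by $m/m_i$, and deduces $\delta(\pi)=-2+\sum_i\bigl(1-\tfrac{1}{m_i}\bigr)\le 0$, contradicting the numerical criterion $\delta(\pi)>0$ for an elliptic fibration of Kodaira dimension one (\cite[Ch.~V, Prop.~12.5]{BHPV}). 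You instead perform the ramified base change along $\tilde R$, appeal to the classical theory of multiple fibres under base change to obtain a relatively minimal elliptic surface $A'\to\BP^1$ with a section and only smooth fibres, compute $\varkappa(A')=-\infty$ via Noether's formula and the canonical bundle formula, and descend uniruledness to $A$. Your route is heavier: the step you rightly flag --- that a multiple fibre with smooth elliptic reduction becomes a reduced smooth fibre, with no new degenerate fibres, after a base change whose ramification indices over $P_i$ are divisible by $m_i$ --- is classical but nontrivial, and your local computation $\pi^{*}t=(\mathrm{unit})\cdot s^{m_i}$ is exactly the right input for it. What your version buys is that the hypothesis $\varkappa(A)=1$ is used only through $\varkappa(A)\ge 0$ (non-uniruledness), whereas the paper uses the sharper equivalence $\varkappa(A)=1\Leftrightarrow\delta(\pi)>0$. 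In essence both arguments verify that the orbifold $(\BP^1;m_1P_1,\dots,m_kP_k)$ with $\delta(\pi)>0$ admits no non-constant orbifold map from $\BP^1$: the paper checks this by the Hurwitz count, while you construct the corresponding orbifold cover and observe it would be a ruled surface dominating $A$.
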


\begin{proof}
The surface $A$ enjoys the following properties:\begin{itemize}
\item  Euler characteristics $e(A)=0$ ( see \cite[Chapter IV, section 4, Theorm 6]{Shaf});
\item   Since $K_A^2=0,$ we have  $\chi(A)=  \chi(A, \mathcal O_A)=0  $  ( see  \cite{Shaf});
\item   If  fibration $\pi$ has precisely $k$ multiple fibers $F_1,\dots,F_k$ with multiplicities $m_1,\dots,m_k,$ respectively, then 
\begin{equation}\label{delta}
\delta(\pi):=2g-2+\sum\limits_{i=1}^{i=k}(1-\frac{1}{m_i})>0;\end{equation}
(see  \cite [Chapter V, proposition 12.5]{BHPV})
\item  In particular, $2g-2+k>0.$
\item Since $\pi$ is a  pluricanonical fibration  every automorphism $\phi\in \Aut(A)$ is  $\pi-$fiberwise. 
\item For every automorphism $\phi\in \Aut(A)$  the subset  $F_{sing}=F_1\cup\dots\cup F_k$ is invariant since multiple fibers go to multiple fibers;
\end{itemize}

Let $B\subset A$ be a rational curve. Since it cannot be contained in a fiber of $\pi,$ it is mapped by $\pi$ onto $C$
with some degree $m\ge1.$  Hence $C$ is rational.
Assume that $B$ intersects $F_i$ at points $a_1^i, \dots,a^i_{n_i}$  that are ramification points of restriction $\tilde \pi$  of $\pi$  onto $B,$ of orders 
 $r_{i,1}\dots,r_{i,n_i},$ respectively. Then \begin{itemize}
\item $r_{i,1}+\dots+r_{i,n_i}=m,$ 
\item $r_{i,j}\ge  m_i \ j=1,\dots , n_i;$
\item $n_i\le \frac{m}{m_i}.$
\end{itemize}

Assume that $\tilde \pi$  has also ramification points $b_1, \dots,b_r$  of orders $p_1,\dots,p_r$  respectively, (including nodes of $B$) outside  $ F_{sing}.$  

By the Hurwuitz formula we have 
\begin{equation}\label {Hurwitz}
2=2m-\sum\limits_{i=1}^{i=k}\sum\limits_{j=1}^{j=n_i}(r_{i,j}-1)-\sum\limits_{l=1}^{l=r}(p_l-1)=\end{equation}
$$2m-mk+\sum n_i  -\nu,$$
where $\nu:=\sum\limits_{l=1}^{l=r}(p_l-1)$ is a non-negative number. 

 Thus,  dividing by $m$ we get  $$k-2=\frac{1}{m}(-2+\sum n_i -\nu)\le \sum \frac{1}{m_i},$$
and $$\delta(\pi):=-2+\sum\limits_{i=1}^{i=k}(1-\frac{1}{m_i})=-2+k-\sum \frac{1}{m_i}\le 0$$
which contradicts to \eqref{delta}\end{proof}

Thus, in {\bf Case 3} surface $S$ is rigid as well, and  $\Aut(W) $ is Jordan by \thmref{main}.

\end{proof}

\begin{remark}
\label{complexC}
In the course of the proof of   \thmref{main}  and  \corref{main3d}  it suffices to consider the case when the ground field is the field $\BC$ 
of complex numbers. Indeed, suppose that we know that the Theorems  hold true when the ground field is $\BC$. Let $k$ be any algebraically closed field of characteristic $0$ and 
an algebraic variety $W$ over $k$ satisfies the conditions either of \thmref{main}  or \corref{main3d}.  Let us assume 
that $\Aut(W)$ is {\sl not} Jordan. We need to arrive to a contradiction.

The variety $W$  is defined over a subfield $k_0$ (of $k$) such  that $k_0$ is finitely generated over the field $\Q$ of rational numbers, i.e., there is
a quasiprojective variety $W_0$ over $k_0$ such that $W=W_0\times_{k_0}k$. (Clearly, $k_0$ is a countable field.)  Replacing if necessary $k_0$ by its finitely generated extension, we may assume that there is
 a   surface $A_0$ over $k_0$ and a $k_0$-birational map between $W$ and $A_0\times \mathbb{P}^1$.  Moreover,  we may choose $k_0$ in such  a way that 
\begin{itemize}\item
 if $A$ is bielliptic,  the same is valid for $A_0$  (the bielliptic structure would be defined over $k_0$);
\item  if $\varkappa(A)=1,$ the same is valid for $A_0$  (the pluricanonical fibration  would be defined over $k_0$);
\item  if a pluricanonical fibration of $A$ has smooth irreducible elliptic fibers, the same is valid for $A_0$ (smoothness is preserved under base change  \cite[Proposition3.38, Chapter 4]{Liu})
\item if $A$ contains no rational curves the same is valid for  $A_0.$   Indeed, if $A_0$ contained  a rational  curve, then
for some integer $d$  one  of the irreducible  quasiprojective components of the variety $RatCurves ^n_d(A)$  (see \cite[Definition 2.11]{Kollar})  would have  a point over $\BC.$  But then it would have  a point over $k$ as well, since $k$ is algebraically closed. \end{itemize}


 The non-Jordanness of   $\Aut(W)$ means that there exists an infinite sequence of finite subgroups $\{G_i \subset \Aut(W)\}_{i=1}^{\infty}$, whose Jordan indices $J_{G_i}$ tend to infinity.  For each positive $i$ there is a subfield $k_i$ of $k$ that contains $k_0$ and is finitely generated over $k_0$, and such that all automorphisms from $G_i$ are defined over $k_i$. Clearly,  all $k_i$ are countable fields.
The compositum of all $k_i$'s (in $k$) is countably generated over $k_0$ and therefore is also a countable field.
  Let us consider the algebraic closure $k_{\infty}$ of this compositum in $k$. Clearly,
$k_{\infty}$ is an algebraically closed countable subfield of $k$ that contains all $k_i$. Let us consider the quasiprojective variety $W_{\infty}=W_0\times_{k_0}k_{\infty}$. Clearly, there exist group embeddings $G_i \hookrightarrow \Aut_{k_{\infty}}(W_{\infty})$ for all  positive $i$.  This implies that $\Aut_{k_{\infty}}(W_{\infty})$ is {\sl not} Jordan.

Since 
$k_{\infty}$  is countable,  there is a field embedding $k_{\infty}\hookrightarrow \BC$.
 Let us consider the complex quasiprojective variety $W_{\BC}=W_{\infty}\times_{k_{\infty}}\BC$, which is birational to $A_{\BC}\times \BP^1$ where $A_{\BC}=A\times_{k_0}\BC$ is a complex variety meeting conditions of \thmref{main}.
In particular, $\Aut(W_{\BC})$ is Jordan. On the other hand,
  there is a group embedding $\Aut(W_{\infty})\hookrightarrow \Aut(W_{\BC})$. This implies that $\Aut(W_{\BC})$ is {\sl not} Jordan as well, which gives us the desired contradiction.
\end{remark}

\end{document}